\newtheorem{theorem}{Theorem}
\newtheorem{lemma}[theorem]{Lemma}
\newtheorem{proposition}[theorem]{Proposition}
\newtheorem{corollary}[theorem]{Corollary}
\theoremstyle{definition}
\newtheorem{definition}[theorem]{Definition}
\newtheorem{notrems}[theorem]{Notation and Remarks}
\newtheorem{remark}[theorem]{Remark}
\newcommand{\Section}[1]{\section{#1}\setcounter{theorem}{0}}
\begin{document}

\title[Heat coefficients of surfaces with curved conical singularities]{Heat coefficients of surfaces with curved conical singularities}
\author{Dorothee Schueth}
\address{Institut f\"ur Mathematik, Humboldt-Universit\"at zu
Berlin, 10099 Berlin, Germany}
\email{dorothee.schueth@hu-berlin.de}

\keywords{Laplace operator, heat kernel, heat coefficients, surfaces, conic singularities}
\subjclass[2010]{58J50}

\begin{abstract}
Let $(M,g)$ be a two-dimensional Riemannian manifold of finite diameter with a conical singularity. Under the assumption that the metric near the
cone point~$C$ is rotationally invariant, but not necessarily flat, we give an explicit formula for the coefficient $b_{1/2}(C)$ in the heat trace expansion $\operatorname{tr}(\operatorname{exp}(-t\Delta_g))\sim_{t\searrow0}
(4\pi t)^{-1}\sum_{j=0}^\infty a_j(M) t^j+\sum_{j=0}^\infty b_{j/2}(C)t^{j/2}+\sum_{j=0}^\infty c_{j/2}(C) t^{j/2} \log t$.
In the case that the Gaussian curvature~$K$ of $(M,g)$ satisfies $|K(p)|\to\infty$ as $p\to C$,
we show that $b_{1/2}(C)$ varies irrationally under constant rescalings of the distance
circles near the cone point. This is a sharp contrast to the behavior of $b_0(C)$ and of those coefficients $b_j(C)$
which appear in certain known formulas in the case of orbifold cone points or corners of geodesic polygons.
\end{abstract}

\maketitle

\Section{Introduction}
\label{sec:intro}
\noindent
After the seminal work of Cheeger~\cite{Ch83} on extending the theory of the Laplace operator to Riemannian spaces with singularities,
the asymptotic behaviour of the resolvent trace and the heat trace on manifolds with conical singularities (or, more generally, stratified spaces with a stratum of conical type) was studied, among others, by Br\"uning and Seeley \cite{BS87}, \cite{BS91} who considered certain associated one parameter families of operators using a functional analytic approach. 
Their theory was extended to stratified spaces with iterated cone-edge metrics by Hartmann, Lesch, and Vertman \cite{HLV18}, \cite{HLV21}.
In the case of two-dimensional Riemannian manifolds with isolated conical singularities, the metric near such a singularity~$C$ has the form
\begin{equation}
\label{eq:g}
g=dr^2+f(r)^2 d\theta^2,
\end{equation}
where $d\theta^2$ is the standard metric on the circle~$S^1$ of length~$2\pi$, and where $f$ is a smooth function on some $[0,\varepsilon)$ with $f(0)=0$ and $f'(0)>0$. The special case $f(r)=\textrm{const}\cdot r$ corresponds to a cone point in the classical sense, where the metric near the singularity is flat.

Assume that the conical singularity $C$ is the only singularity of the surface~$M$ and that the closure~$\overline M=M\cup\{C\}$ of $M$ with respect to the Riemannian distance is compact. We denote by~$\Delta$ the Friedrichs extension of the Laplace operator on functions on~$M$. General results from~\cite{BS87} imply that in this setting, the associated heat trace has an asymptotic expansion
\begin{equation}
\label{eq:heatasy1}
\operatorname{tr}(\exp(-t\Delta))\sim{\textstyle{\frac1{4\pi t}}}\sum_{j=0}^\infty a_j(M) t^j+\sum_{j=0}^\infty b_{j/2}(C) t^{j/2}+\sum_{j=0}^\infty c_{j/2}(C) t^{j/2}\log t,
\end{equation}
for $t\searrow0$, where the coefficients in the first sum are (possibly regularized) integrals over certain curvature invariants on~$M$, while the second and third sums depend only on the germ of~$f$ at $r=0$ and correspond to the contribution of the cone point $C$; see Section~\ref{sec:prelim} for more details.
It is well-known that $a_0(M)=\operatorname{vol}(M)$. Moreover, $b_0(C) = \frac1{12}(\frac1{f'(0)}-f'(0))$ (see~\cite{BS87}, p.~424), and, in our situation, $c_0(C) =0$ (see~\cite{BS87}, p.~423/424) and $c_{j/2}(C)=0$ for all odd~$j$ (see, e.g., Remark~\ref{rem:c-coeffs} below).

In her dissertation~\cite{Su17}, Suleymanova computed $b_0(C)$ and $c_0(C)$ for cone points~$C$ of higher dimensional manifolds
under the assumption that the metric near the cone point is of the special form $g=dr^2+r^2 g_N$ on a punctured neighborhood $U\approx (0,\varepsilon)\times N$ of~$C$, where $g_N$ is a Riemannian metric on the cross section~$N$. 

Note that cone points of orbisurfaces constitute a special case of conical singularities -- at least if one assumes full rotational symmetry of a punctured neighborhood of the cone point, such that it fits into the above setting. Dryden et al.~\cite{DGGW} applied general results by Donnelly to derive a qualitative description of the heat trace expansion for compact orbifolds. No logarithmic terms occur in that case. For cone points~$C$ of order~$n$ in two-dimensional Riemannian orbifolds, it was shown in~\cite{DGGW} that $b_1(C)=\big(\frac1{360}(n^3-\frac1n)+\frac1{36}(n-\frac1n)\big)K(p)$, where $K$ is the Gauss curvature and $p$ the point corresponding to~$C$ in an orbifold chart. In~\cite{Sc19}, the author obtained a similar formula for $b_2(C)$ in the same context; $b_2(C)$ turns out to be linear combination of~$K(p)^2$ and $(\Delta K)(p)$, where the coefficients are, again, rational functions of the order of the cone point.

The cited results for $b_1(C)$ and $b_2(C)$ in the orbifold case do not assume full rotational symmetry of a punctured neighborhood of~$C$.

U\c car \cite{Uc17} obtained explicit formulas for \emph{all} $b_\ell(C)$ for cone points~$C$ of two-dimensional orbifolds under the special assumption that the orbifold has constant curvature~$\kappa\in\mathbb{R}$.
In our notation, this corresponds to the case $f(r)=\frac1n s_\kappa(r)$ in~\eqref{eq:g}, where $n$ is the order of the cone point and $s_\kappa$ is the modified sine function determined by $s_\kappa''=-\kappa s_\kappa$, $s_\kappa(0)=0$, and $s_\kappa'(0)=1$. More specifically, $b_\ell(C)$ can then be written as $\kappa^\ell$ times $\frac1n p_\ell(n)$, 
where $n$ is the order of the cone point and $p_\ell$ is a certain polynomial of order $2\ell+2$. U\c car~\cite{Uc17} also proved formulas of the same type for the contributions of corners of geodesic polygons in surfaces of constant curvature to the Dirichlet heat trace expansion, regardless of whether the angle at the corner is of the form $\pi/n$ or not.

We note here, without proving it in this paper, that in the case of full rotational symmetry around~$C$, it would be possible to reprove the formulas for $b_1(C)$ from~\cite{DGGW} and for $b_2(C)$ from~\cite{Sc19} (or, for the constant curvature case, from~\cite{Uc17}) using similar methods as in the current paper, i.e. relying on the approach of~\cite{BS87}. For this, it plays no role at all whether $n=1/f'(0)>0$ is a natural number or not. Therefore, replacing $n$ by $1/f'(0)$,
we can note that these $b_\ell(C)$ depend rationally on~$f'(0)$. Equivalently, under rescalings of~$f$ by a constant $\lambda>0$, these coefficients depend rationally on~$\lambda$. (Note that the Gaussian curvature $K=-f''/f$ and its derivatives, which also appear in the $b_\ell(C)$ above, are invariant under constant rescalings of~$f$.)

If $p$ is a point in a \emph{smooth} two-dimensional Riemannian manifold~$M$ and the metric on a neighborhood of~$p$ has full rotational symmetry around~$p$, then the metric near $p$ can be written in the form~\eqref{eq:g}, where $r=d(p,\,.\,)$ and $f(r)$ is the distortion of the length of the distance circle under the geodesic exponential map~$\exp_p$\,. In that case, it is well-known that $f(r) = r -\frac16 K(p)r^3 + O(r^4)$; in particular, $f''(0)=0$. Passing to an orbifold cone point of order~$n$ just corresponds to passing from $f$ to $f/n$, so $f''(0)=0$ still holds for cone points in orbisurfaces. The same, of course, holds for arbitrary constant rescalings of~$f$.

In~\eqref{eq:g}, however, we allow more general functions~$f$, as long as they are smooth on $[0,\varepsilon)$ for some $\varepsilon>0$ and satisfy $f(0)=0$ and $f'(0)>0$. In particular, we allow $f''(0)\ne0$. Since $K=-f''/f$ in the coordinates from~\eqref{eq:g}, the inequality $f''(0)\ne0$ is equivalent to $|K(p)|\to\infty$ for $p\to C$. On the other hand, if a punctured neighborhood of the singularity happens to be isometric to a rotational surface embedded in~$\mathbb{R}^3$ then $f'(0)=\sin\varphi$, where~$\varphi$ is the sine of the angle between the axis and the initial direction of the profile curve; if $\varphi<\pi/2$ then the inequality $f''(0)\ne0$ is equivalent to nonzero curvature of the profile curve at its initial point (see Remark~\ref{rem:embedding}).

If $f''(0)\ne0$ in~\eqref{eq:g}, then, unlike in the orbifold case, half powers of~$t$ can occur in the middle sum of~\eqref{eq:heatasy1}, and logarithmic terms can occur, too.
For example, it turns out that $c_1(C)=-\frac1{60} f''(0)^2/f'(0)$ (see Remark~\ref{rem:c-coeffs}).
The main goal of this paper is to prove the following explicit formula for $b_{1/2}(C)$:
\begin{equation}
\label{eq:bhalftotintro}
b_{1/2}(C)
=\frac{2 f''(0)}{\sqrt\pi f'(0)}\int_0^1\left(\hat h_{2,\alpha}(1-u^2)-\frac14\hat h_{0,\alpha}(1-u^2)\right)\,du,
\end{equation}
where $\alpha:=1/f'(0)$ and
$\hat h_{k,\alpha}$ is defined as follows for all $k\in\mathbb{N}_0$ and $\alpha>0$: For $z\in\mathbb{C}$ with $|z|<1$ and $|1-z|<1$, first let
$$h_{k,\alpha}(z):=\sum_{n=0}^\infty\alpha^k n^k(1-z)^{n\alpha}=\left(-(1-z)\frac d{dz}\right)^k\frac1{1-(1-z)^\alpha}\,.
$$
This turns out to define a meromorphic function in a neighborhood of the origin, with a pole in $z=0$. Denote its regular part (obtained by subtracting the principal part of the Laurent series) by $h^{\textrm{reg}}_{k,\alpha}(z)$ and let
$$\hat h_{k,\alpha}(z):=\frac1z(h^{\textrm{reg}}_{k,\alpha}(z)-h^{\textrm{reg}}_{k,\alpha}(0));
$$
see Section~\ref{sec:formulab1} for a detailed discussion of these functions.

Our formula for $b_{1/2}(C)$ differs fundamentally from the one which is stated in~\cite{Su17-2} and claims this coefficient to be equal to $\frac{f''(0)}{f'(0)}\cdot\frac5{96\sqrt\pi}$. (More precisely, $1/f'(0)$ is written on p.~10 of that paper as $\cot\varphi$ instead of $1/\sin\varphi$, where $\varphi$ is the opening angle in the case of embedded rotational surfaces as above; however, this small mistake is not the main reason for the difference between the formula from~\cite{Su17-2} and our result.)
See Remark~\ref{rem:polespec} for more details concerning this discrepancy. 

Moreover, in contrast to the situation for the $b_\ell(C)$ as described above, it turns out that if $f''(0)\ne0$ then $b_{1/2}(C)$ does \emph{not} depend rationally on the scaling factor~$\lambda$ when one replaces $f$ by $\lambda f$ -- or equivalently, when one rescales small distance circles around~$C$ by~$\lambda$. In fact, the integral in~\eqref{eq:bhalftotintro} turns out to be a non-rational function of~$\alpha=1/f'(0)$. To the author's knowledge, this is the first instance where such an irrational behaviour of a coefficient in the heat expansion of conical singularities is detected.

This paper is organized as follows: In Section~\ref{sec:prelim} we recall, adapted to our special setting, the necessary background from~\cite{BS87} concerning the description of~$\Delta$ near the singularity in terms of a certain one parameter family of operators $A(r)$, the associated scaled boundary operators and the expansion of the trace of their resolvents, thereby arriving at a first description of the contributions of the singularity to the coefficients of the asymptotic expansion of the resolvent trace. We also recall the relations between these and the corresponding coefficients of the heat trace expansion.
In Section~\ref{sec:formulab1} we derive our explicit formula for~$b_{1/2}(C)$ (Corollary~\ref{cor:b1total}), and
in Section~\ref{sec:irrat} we prove that if $f''(0)\ne0$ then $b_{1/2}(C)$ is $f''(0)/f'(0)$ times an irrational function of~$f'(0)$ (Theorem~\ref{thm:irrat}).
\noindent

\Section{Preliminaries}
\label{sec:prelim}

\begin{notrems}
\label{notrems:Ccoord}
\begin{itemize}
\item[(i)]
In this paper, $(M,g)$ will always denote a two-dimensional Riemannian manifold of finite diameter with one conical singularity~$C$.
We assume that the closure of~$M$ with respect to the Riemannian distance is compact and equals $\overline M=M\cup\{C\}$.
\item[(ii)]
By definition of the notion of conical singularity, there exists $\varepsilon>0$ such that the punctured $\varepsilon$-neighborhood
$U_\varepsilon$ of~$C$ is isometric to $(0,\varepsilon)\times S^1$ equipped with the metric
$$dr^2+f(r)^2 d\theta^2
$$
for some $f\in C^\infty([0,\varepsilon),\mathbb R_{\ge0})$ with $f(0)=0$ and $f'(0)>0$. Here, $d\theta^2$ denotes the standard metric on~$S^1$ with length~$2\pi$.
\end{itemize}
\end{notrems}

\begin{remark}
In the above coordinates, the Gaussian curvature $K$ is given by
$$K=-f''/f.
$$
Therefore, for $p\to C$ we have
\begin{equation*}
K(p)\to\begin{cases}-f'''(0)/f'(0),&f''(0)=0,\\ \infty,&f''(0)<0,\\ -\infty,&f''(0)>0.\end{cases}
\end{equation*}
In particular, $f''(0)\ne0$ is equivalent to $|K(p)|\to\infty$ for $p\to C$.
\end{remark}

\begin{remark}
\begin{itemize}
\label{rem:embedding}
\item[(i)]
In the special case that $U_\varepsilon$ can be embedded isometrically as a surface of revolution around the $x$-axis in~$\mathbb{R}^3$ with profile curve $(0,\varepsilon)\ni r\mapsto
(x(r),0,z(r))\in\mathbb{R}^3$ of unit speed with $x(r)\ge0$ and $z(r)\ge0$ for all~$r$, one has $f(r)=z(r)$, ${x'}^2+{f'}^2=1$, and $x'x''+f'f''=0$. In particular, $f'(0)\le1$.
\item[(ii)]
If $f'(0)<1$ in the situation of~(i), then $x'(0)>0$, and the curvature $\kappa(0)$ of the profile curve in the $(x,z)$-plane in its initial point is given by
$$\kappa(0)=(x'f''-f'x'')(0)=\frac{({x'}^2f''+(f')^2f'')(0)}{x'(0)}=\frac{f''(0)}{x'(0)}=\frac{f''(0)}{f'(0)}\cdot\tan\varphi,
$$
where $\varphi=\arctan(f'(0)/x'(0))=\arcsin(f'(0))\in(0,\frac\pi2)$ is the angle between the $x$-axis and the initial direction of the profile curve.
In particular, $f''(0)\ne0$ is equivalent to $\kappa(0)\ne0$ in this situation.
\end{itemize}
\end{remark}

\begin{remark}
\label{rem:laplace}

(i)
Following~\cite{BS87}, p.~423, we note that in the setting of \ref{notrems:Ccoord}(ii) the associated Laplacian
$$\Delta=-f^{-1}(\partial_r f\partial_r)-f^{-2}\partial_\theta^2
$$
on~$U_\varepsilon$ is conjugate, via the map $u\mapsto f^{1/2} u$, to the operator
\begin{equation*}
\begin{aligned}
\label{eq:opl}
L:=&{}-\partial_r^2+f^{-2}\left(-\partial_\theta^2-\frac14(f')^2+\frac12 ff''\right)\\
=&{}-\partial_r^2+r^{-2}A(r)
\end{aligned}
\end{equation*}
on $(0,\varepsilon)\,\times S^1$, where each
\begin{equation*}
A(r):=-\frac{r^2}{f(r)^2}\partial_\theta^2-\frac14\frac{r^2f'(r)^2}{f(r)^2}+\frac12\frac{r^2 f''(r)}{f(r)}
\end{equation*}
acts on $C^\infty(S^1)$.
Note that $A$ smoothly extends  to $r=0$.

(ii)
We have
$$r^2/f(r)^2=(f'(0)+\frac r2 f''(0)+O(r^2))^{-2}=f'(0)^{-2}(1-rf''(0)/f'(0)+O(r^2))
$$
and, thus,
$$r^2f'(r)^2/f(r)^2=(1-rf''(0)/f'(0))(1+rf''(0)/f'(0))^2+O(r^2)=1+rf''(0)/f'(0)+O(r^2).
$$
Moreover,
$$r^2f''(r)/f(r)=rf''(0)/f'(0)+O(r^2).
$$ 
Hence,
\begin{equation*}
\begin{aligned}
 A(r)=&{}-\frac1{f'(0)^2}\left(1-r\frac{f''(0)}{f'(0)}+O(r^2)\right)\partial_\theta^2
      -\frac14\left(1+r\frac{f''(0)}{f'(0)}\right)+\frac12 r\frac{f''(0)}{f'(0)}+O(r^2)\\
			=&{}-\frac1{f'(0)^2}\left(1-r\frac{f''(0)}{f'(0)}+O(r^2)\right)\partial_\theta^2
			-\frac14\left(1-r\frac{f''(0)}{f'(0)}+O(r^2)\right).
    \end{aligned}
\end{equation*}
In particular,
\begin{equation}
\label{eq:A0}
A(0)=-\frac1{f'(0)^2}\partial_\theta^2-\frac14\ge-\frac14
\end{equation}
and
\begin{equation}
\label{eq:Aprime0}
A'(0)=k_f A(0)\mbox{ with }k_f:=-\frac{f''(0)}{f'(0)}\,.
\end{equation}
\end{remark}

\begin{remark}
\label{rem:resolv}
We denote by~$\Delta$ the Friedrichs extension of the Laplacian on the singular surface~$M$. 
For $m>\dim(M)/2=1$, the resolvent $(\Delta+z^2)^{-m}$ is of trace class; see, e.g.~\cite{BS91}, p.~275. Moreover, there is an asymptotic expansion as $z\to\infty$:
\begin{equation}
\label{eq:resolvasy}
\operatorname{tr}(\Delta+z^2)^{-m}\sim\sum_{j=0}^\infty a_{j,m}(M) z^{-2m+2-2j}+\sum_{j=0}^\infty b_{j,m}(C) z^{-2m-j}+\sum_{j=0}^\infty c_{j,m}(C)z^{-2m-j}\log z,
\end{equation}
where the coefficients in the first sum are (possibly regularized) integrals over certain curvature invariants on~$M$, while the second and third sums depend only on the germ of~$f$ at $r=0$ and consist of contributions of the cone point $C$.
This follows (using $\operatorname{dim}(M) = 2$ and $\operatorname{dim}(C) = 0$) as a special case of either \cite{BS91}, Theorem~5.2,  or~\cite{HLV21}, Theorem~1.1,  each with slightly different notation.
\end{remark}

\begin{corollary}
The heat trace associated with~$\Delta$ has the asymptotic expansion
\begin{equation}
\label{eq:heatasy}
\operatorname{tr}(\exp(-t\Delta))\sim{\textstyle{\frac1{4\pi t}}}\sum_{j=0}^\infty a_j(M) t^j+\sum_{j=0}^\infty b_{j/2}(C) t^{j/2}+\sum_{j=0}^\infty c_{j/2}(C) t^{j/2}\log t,
\end{equation}
for $t\searrow0$,
where the relation to the coefficients of~\eqref{eq:resolvasy} is as follows:
\begin{equation}
\label{eq:coeffsrel}
\begin{aligned}
a_j(M)&=\frac{4\pi(m-1)!}{\Gamma(m-1+j)}a_{j,m}(M)\\
b_{j/2}(C)&=\frac{(m-1)!}{\Gamma(m+\frac j2)}b_{j,m}(C) + \frac{(m-1)!\,\Gamma'(m+\frac j2)}{2\Gamma(m+\frac j2)^2}c_{j,m}(C)\\
c_{j/2}(C)&= - \frac{(m-1)!}{2\Gamma(m+\frac j2)}c_{j,m}(C)
\end{aligned}
\end{equation}
for each $j\in\mathbb{N}_0$.
\end{corollary}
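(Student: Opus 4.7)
The strategy is to derive both the existence of the heat trace expansion~\eqref{eq:heatasy} and the coefficient relations~\eqref{eq:coeffsrel} from the resolvent expansion~\eqref{eq:resolvasy} by means of the classical Mellin-type identity
\begin{equation*}
\operatorname{tr}(\Delta+z^2)^{-m}=\frac1{(m-1)!}\int_0^\infty t^{m-1}e^{-tz^2}\operatorname{tr}(e^{-t\Delta})\,dt,
\end{equation*}
valid for $m>1$ and $\operatorname{Re}(z^2)$ sufficiently large, with $\operatorname{tr}(e^{-t\Delta})$ finite for every $t>0$. Alternatively, since the existence of~\eqref{eq:heatasy} can also be quoted directly from~\cite{BS87} (see the discussion around~\eqref{eq:heatasy1}), the same identity may be used merely to compare the two expansions and read off~\eqref{eq:coeffsrel}.

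Plugging~\eqref{eq:heatasy} into the right-hand side and evaluating the resulting integrals by means of
\begin{equation*}
\int_0^\infty t^{s-1}e^{-tz^2}\,dt=\Gamma(s)z^{-2s}\quad\text{and}\quad\int_0^\infty t^{s-1}e^{-tz^2}\log t\,dt=\Gamma'(s)z^{-2s}-2\Gamma(s)z^{-2s}\log z
\end{equation*}
(with $s=m-1+j$ for the $a_j(M)$-terms and $s=m+j/2$ for the $b_{j/2}(C)$- and $c_{j/2}(C)$-terms) produces an asymptotic expansion of $\operatorname{tr}(\Delta+z^2)^{-m}$ in powers of $z^{-1}$ and $z^{-1}\log z$. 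Comparing with~\eqref{eq:resolvasy} yields the triangular system
\begin{equation*}
a_{j,m}(M)=\frac{\Gamma(m-1+j)}{4\pi(m-1)!}a_j(M),\qquad c_{j,m}(C)=-\frac{2\Gamma(m+j/2)}{(m-1)!}c_{j/2}(C),
\end{equation*}
\begin{equation*}
b_{j,m}(C)=\frac{\Gamma(m+j/2)}{(m-1)!}b_{j/2}(C)+\frac{\Gamma'(m+j/2)}{(m-1)!}c_{j/2}(C),
\end{equation*}
which we solve for $a_j(M)$, $b_{j/2}(C)$, $c_{j/2}(C)$; the formula for $b_{j/2}(C)$ is obtained by first expressing $c_{j/2}(C)$ in terms of $c_{j,m}(C)$ via the third equation and then substituting into the $b_{j,m}(C)$-equation, which produces precisely~\eqref{eq:coeffsrel}.

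The only delicate point is to justify the termwise integration of the asymptotic expansion~\eqref{eq:heatasy}. The standard device is to split the integral as $\int_0^{t_0}+\int_{t_0}^\infty$ for some fixed small $t_0>0$: on $[t_0,\infty)$ the heat trace is bounded (and in fact tends to $\dim\ker\Delta$) while $t^{m-1}e^{-tz^2}$ is exponentially small in $z$, so this part is negligible against every negative power of $z$; on $[0,t_0]$ one truncates~\eqref{eq:heatasy} at order $N$, applies a pointwise remainder estimate, integrates the finitely many explicit terms using the two formulas above, and checks that the remainder integrates to $O(z^{-2m-N/2})$. Letting $N\to\infty$ produces~\eqref{eq:resolvasy} with the coefficients computed above, and the uniqueness of asymptotic expansions of this form then forces the identities~\eqref{eq:coeffsrel}.
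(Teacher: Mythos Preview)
Your argument is correct, but it proceeds in the opposite direction from the paper's proof. The paper starts from the resolvent expansion~\eqref{eq:resolvasy} (already established in Remark~\ref{rem:resolv}) and recovers the heat trace via the Cauchy-type contour integral
\[
\operatorname{tr}e^{-t\Delta}=t^{1-m}\frac{(m-1)!}{2\pi i}\int_\gamma e^{-t\lambda}\operatorname{tr}(\Delta-\lambda)^{-m}\,d\lambda,
\]
inserting~\eqref{eq:resolvasy} with $z=\sqrt{-\lambda}$ and evaluating the resulting integrals $\int_\gamma e^{-t\lambda}(-\lambda)^{-n/2}\,d\lambda$ and $\int_\gamma e^{-t\lambda}(-\lambda)^{-n/2}\log\sqrt{-\lambda}\,d\lambda$ via Hankel's formula. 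This directly produces both the existence of~\eqref{eq:heatasy} and the relations~\eqref{eq:coeffsrel} in one step. You instead quote the existence of the heat expansion from~\cite{BS87}, then feed it into the Laplace--Mellin identity to reproduce~\eqref{eq:resolvasy}, and finally invert the resulting triangular system. Your computations check out and yield exactly~\eqref{eq:coeffsrel}.

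Each route has its advantage: the paper's contour-integral argument is self-contained in that it \emph{derives} the heat expansion from the resolvent expansion rather than assuming it, while your Laplace-transform argument avoids contour integrals and Hankel's formula, at the cost of having to cite the existence of~\eqref{eq:heatasy} separately. One small caution about your first sentence: the Mellin identity you write expresses the resolvent trace in terms of the heat trace, so it cannot by itself ``derive the existence of the heat trace expansion from the resolvent expansion'' as you initially suggest; only your second alternative (quoting existence, then comparing) is actually carried out, and that is fine.
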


\begin{proof}
The existence, in itself, of the asymptotic expansion~\eqref{eq:heatasy} is well-known; e.g., it is a special case of \cite{BS87}, Theorem~7.1. The explicit relations~\eqref{eq:coeffsrel} to the coefficients in the resolvent expansion are a bit hidden in the formulas on p.~416 of that (more general) paper. The argument is as follows:
Let $\gamma\subset\mathbb{C}$ denote the contour $\{\lambda:\operatorname{arg}(\lambda+1)=\pm\pi/4\}$, traversed upward. Then
\begin{equation*}
\operatorname{tr}e^{-t\Delta}=t^{1-m}\frac{(m-1)!}{2\pi i}\int_\gamma e^{-t\lambda}\operatorname{tr}(\Delta-\lambda)^{-m}\,d\lambda.
\end{equation*}
We insert~\eqref{eq:resolvasy} with $z=\sqrt{-\lambda}$ and use the equations
\begin{equation}
\label{eq:cauchytrans}
\begin{aligned}
\int_\gamma e^{-t\lambda}(-\lambda)^{-n/2}d\lambda&=\frac{2\pi i}{\Gamma(n/2)}t^{-1+n/2}\\
\int_\gamma e^{-t\lambda}(-\lambda)^{-n/2}\log\sqrt{-\lambda}\,d\lambda&=
- \frac{\pi i}{\Gamma(n/2)}t^{-1+n/2}\log t + \frac{\pi i\,\Gamma'(n/2)}{\Gamma(n/2)^2}t^{-1+n/2}
\end{aligned}
\end{equation}
 for all $t>0$ and $n\in\mathbb{N}$. Comparing coefficients then yields~\eqref{eq:coeffsrel}.
\end{proof}

\begin{remark}
\label{rem:naturebcm}
We will now summarize some facts, mainly from~\cite{BS87}, in order to establish formulas for the resolvent expansion coefficients $b_{j,m}(C)$ and $c_{j,m}(C)$ in our specific situation.

(i)
Recall from Remark~\ref{rem:laplace} that $\Delta$ on 
$$U_\varepsilon\simeq (0,\varepsilon)\times S^1
$$
is conjugate to
$$
L=-\partial_r^2+r^{-2}A(r).
$$
We choose a modification~$\tilde A$ of~$A$ as in~\cite{BS87}, p.~396;
in particular, $\tilde A$ is defined on all of $[0,\infty)$, coincides with~$A$ in some neighborhood of $r=0$, and $\tilde A(r)$ is an elliptic operator on $C^\infty(S^1)$ for each~$r$.
The corresponding modification of~$L$ is the ``boundary operator''
$$L_b:=-\partial_r^2+r^{-2}\tilde A(r).
$$
For any $t\geq 0$, let
$$\tilde A_t(r):=\tilde A(tr)
$$
and consider the ``scaled boundary operator''
$$L_{b,t}:=-\partial_r^2+r^{-2}\tilde A_t(r).
$$
Writing
$$G^m_{b,t}(z):=(L_{b,t}+z^2)^{-m},
$$
we recall from~\cite{BS87}, Lemma~4.9, that for each $m>\operatorname{dim}(M)/2=1$, the operators $G^m_{b,t}(z)$ have kernels $G^m_{b,t}(r,s;z)$ such that each $G^m_{b,t}(\,.\,,\,.\,;z)$ is a continuous map from
$(0,\infty)\times(0,\infty)$ into the space of trace class operators on
$$H:=L^2(S^1,d\theta^2),
$$
and then $G_{b,t}^m$ satisfies
\begin{equation}
\label{eq:trafo}
G_b^m(tr,ts;z/t)=t^{2m-1}G_{b,t}^m(r,s;z),
\end{equation}
where
$$G_b:=G_{b,1}.
$$
Moreover, by~\cite{BS87}, Theorem~4.2,
 $\varphi G^m_{b,t}(z)$ is of trace class for each $\varphi\in C^\infty([0,\infty),\mathbb{R})$ with compact support,
and
\begin{equation*}
\operatorname{tr}(\varphi G^m_{b,t}(z))=\int_0^\infty \varphi(r)\operatorname{tr}_H G^m_{b,t}(r,r;z)\,dr.
\end{equation*}
In particular,
$$\operatorname{tr}(\varphi G^m_b(z))=\int_0^\infty\sigma(r,rz)\,dr
$$
with
\begin{equation*}
\sigma(r,\zeta):=\varphi(r)\operatorname{tr}_H G^m_b(r,r;\zeta/r).
\end{equation*}
By formula~(4.18) in~\cite{BS87},
\begin{equation}
\label{eq:sigmatrafo}
\sigma(r,\zeta)=\varphi(r)\cdot r^{2m-1}\operatorname{tr}_H G^m_{b,r}(1,1;\zeta),
\end{equation}
which is a consequence of~\eqref{eq:trafo}.
Since the operators $L_{b,r}$ are elliptic with resolvents $G_{b,r}$, it follows that there is an asymptotic expansion of the form
\begin{equation}
\label{eq:sigmaasy}
\sigma(r,\zeta)\sim \sum_{j=0}^\infty \varphi(r)\sigma_j(r)\zeta^{-2m+\operatorname{dim}(M)-2j}=\sum_{j=0}^\infty\varphi(r)\sigma_j(r)\zeta^{-2m+2-2j}
\end{equation}
for $\zeta\to\infty$ (see, e.g., \cite{BS87}, p.~421, or \cite{BS91}, p.~286).

(ii)
{}From now on, we let $\varphi$ be a smooth cut-off function on~$[0,\infty)$ with compact support and
$$\varphi(r)\equiv 1\mbox{\ near\ }r=0;
$$
in particular, all derivatives of~$\varphi$ in $r=0$ vanish.
We also denote by~$\varphi$ the corresponding function on~$U_\varepsilon$ and its extension to $M$ by zero outside~$U_\varepsilon$.
The asymptotic expansion~\eqref{eq:sigmaasy} and the Singular Asymptotics Lemma (see \cite{BS87}, p.~372) now imply
\begin{equation}
\label{eq:singasy}
\begin{aligned}
\int_0^\infty \sigma(r,rz)\,dr\sim{}&\sum_{j=0}^\infty \fint_0^\infty r^{-2m+2-2j}\varphi(r)\sigma_j(r)\,dr\cdot z^{-2m+2-2j}\\
&+\sum_{k=0}^\infty \fint_0^\infty\frac{\zeta^k}{k!}{\partial_r^k}_{|r=0}\,\sigma(r,\zeta)\,d\zeta\cdot z^{-k-1}\\
&+\sum_{\ell=0}^\infty \frac1{(2m-3+2\ell)!}\sigma_\ell^{(2m-3+2\ell)}(0) \cdot z^{-2m+2-2\ell} \log z
\end{aligned}
\end{equation}
for $z\to\infty$.
Here, $\fint$ denotes the regularized integral:
$$\fint_0^\infty \mu(r)\,dr:={\operatorname{Res}_0}_{|s=1}\,\mathcal{M}\mu,
$$
where $\mathcal{M}\mu:s\mapsto\int_0^\infty r^{s-1} \mu(r)\,dr$ is the Mellin transform of~$\mu$,
and ${\operatorname{Res}_0}_{|s=1}\,\mathcal{M}\mu$ is the coefficient at $(s-1)^0$ (i.e., the constant term) in the Laurent expansion around $s=1$ of a meromorphic continuation of~$\mathcal{M}\mu$; see~\cite{Le97}, Section~2.1 for a thorough introduction to these concepts. Note that, equivalently,
\begin{equation}
\label{eq:regint}
\fint_0^\infty \mu(r)\,dr={\operatorname{Res}_0}_{|s=0}\int_0^\infty r^s \mu(r)\,dr.
\end{equation}

(iii)
The coefficients $b_{j,m}(C)$ and $c_{j,m}(C)$ in the asymptotic expansion~\eqref{eq:resolvasy} of $\operatorname{tr}(\Delta+z^2)^{-m}$ for $z\to\infty$  are the same as in the asymptotic expansion of
$\operatorname{tr}(\varphi(\Delta+z^2)^{-m})$ which, in turn, are the same as in the asymptotic expansion
of $\operatorname{tr}(\varphi G_b^m(z))=\int_0^\infty\sigma(r,rz)\,dr$ (see, e.g., \cite{BS91}, p.~284).
{}From~\eqref{eq:singasy} it follows that
\begin{equation}
\label{eq:b-explicit}
\begin{aligned}
b_{j,m}(C)&=\frac1{(2m-1+j)!}\cdot\fint_0^\infty \zeta^{2m-1+j}{\partial_r^{2m-1+j}}_{|r=0}\,\sigma(r,\zeta)\,d\zeta\\
&=\frac1{j!}\fint_0^\infty\zeta^{2m-1+j}\,{\partial_r^j}_{|r=0}\,\operatorname{tr}_H G_{b,r}^m(1,1;\zeta)\,d\zeta,
\end{aligned}
\end{equation}
where for the second equation one uses~\eqref{eq:sigmatrafo} and $\varphi(r)\equiv 1$ near $r=0$,
and
\begin{equation}
\label{eq:c-explicit}
c_{j,m}(C)=\begin{cases}
       0,&j\mbox{ odd},\\
			\frac1{(2m-1+j)!}\cdot\sigma_{1+\frac j2}^{(2m-1+j)}(0),&j\mbox{ even}.\end{cases}
\end{equation}

(iv)
Note that for $r$ sufficiently close to~$0$, the asymptotic expansion of $\sigma(r,\zeta)$ for $\zeta\to\infty$ is the same as that of the integral over $(S^1,\operatorname{dvol}_{f^2d\theta^2})$
of the value of the operator kernel of $(\Delta+\zeta^2/r^2)^{-m}$ in $(p,p)$, where $p=(r,\theta)$.
Consider the well-known the asymptotic expansion
$$H(t,p,p)\sim\frac 1{4\pi t}\sum_{\ell=0}^\infty u_\ell(p)t^\ell
$$
for $t\to0$ of the heat kernel~$H$ of~$\Delta$, that is, the (interior) operator kernel associated with~$e^{-t\Delta}$.
Using the first formula of~\eqref{eq:cauchytrans} and the above, one obtains, for all sufficiently small $r>0$:
$$
\sigma(r,\zeta)\sim\sum_{j=0}^\infty \frac1{4\pi}(\zeta/r)^{-2m+2-2j}\int_{S^1}u_j(r,\theta)f(r)d\theta\cdot\frac{\Gamma(m-1+j)}{(m-1)!}
$$
for $\zeta\to\infty$.
Since $g$ on $U_\varepsilon$ is rotationally invariant, we can write $u_j(r):=u_j(r,\theta)$ and get
$$
\sigma(r,\zeta)\sim\sum_{j=0}^\infty \zeta^{-2m+2-2j}\cdot\frac12 r^{2m-2+2j} u_j(r)f(r)\cdot\frac{(m-2+j)!}{(m-1)!}
$$
for $\zeta\to\infty$; thus,
\begin{equation*}
\sigma_j(r)=\frac12 r^{2m-2+2j} f(r)u_j(r)\cdot\frac{(m-2+j)!}{(m-1)!}
\end{equation*}
for all $j\in\mathbb{N}_0$ and all sufficiently small $r>0$ (see also formula~(3.19) of~\cite{Su17} which concerned the special case $f(r)=r$ with general dimension of~$M$).
In particular, \eqref{eq:c-explicit} now implies:
\begin{equation}
\label{eq:c-explicit2}
c_{j,m}(C)\begin{cases} 0,&j\mbox{ odd},\\
\frac1{(2m-1+j)!}\cdot\frac12\cdot{\partial_r^{2m-1+j}}_{|r=0}\,( r^{2m+j} f(r) u_{1+\frac j2}(r))\cdot\frac{(m-1+\frac j2)!}{(m-1)!},&j\mbox{ even}.
\end{cases}
\end{equation}
\end{remark}

\begin{remark}
\label{rem:c-coeffs}
(i)
Recall that
\begin{equation*}
u_0(r)=1\mbox{\ and\ }u_1(r)=\frac16\operatorname{scal}(r)=\frac13 K(r)=-\frac13 f''(r)/f(r).
\end{equation*}
In particular, both $fu_0$ and $fu_1$ are smooth in $r=0$. By~\eqref{eq:c-explicit2}, this implies
$$c_{0,m}(C)=0
$$
(as well as $c_{-2,m}(C)=0$ for the hypothetical coefficient $c_{-2,m}(C)$). This is well-known; see, e.g., \cite{BS87}, p.~423/424. Since $c_{j,m}(C)=0$ for all odd~$j$ by \eqref{eq:c-explicit}, one concludes using~\eqref{eq:coeffsrel}: $c_0(C)=0$, $c_{1/2}(C)=0$, $b_0(C)=b_{0,m}(C)$, and
\begin{equation}
\label{eq:bhalf} 
b_{1/2}(C)= \frac{(m-1)!}{\Gamma(m+\frac12)}b_{1,m}(C).
\end{equation}
Moreover, $u_2(r) = \frac1{15}K(r)^2=\frac1{15}f''(r)^2/f(r)^2$. One easily derives using~\eqref{eq:c-explicit2}:
\begin{equation*}
c_{2,m}(C)=\frac m{30} f''(0)^2/f'(0)
\end{equation*}
and, consequently, $c_1(C)=-\frac1{60} f''(0)^2/f'(0)$. Note that the above statements about $c_{0,m}(C)$ and $c_{2,m}(C)$ do agree with those given in~Proposition~3.3 of \cite{Su17-2}, while our explicit formula for $b_{1,m}(C)$ in Corollary~\ref{cor:b1total} will differ fundamentally from the one given in Proposition~3.5 of~\cite{Su17-2}; see also Remark~\ref{rem:polespec}(i) below.

(ii)
Just for a moment, let us consider the special case that a punctured neighborhood of~$C$ in $\overline M=M\cup\{C\}$, after removing one radial geodesic, is isometric to the interior of a compact subset of a smooth surface. In that case,
the Gaussian curvature and, therefore, each of the functions $u_j$ smoothly extends to $r=0$. By~\eqref{eq:c-explicit2} this implies that all $c_{j,m}(C)$ vanish, and so do the $c_{j/2}(C)$. So, in that setting, no logarithmic terms occur in the asymptotic expansions \eqref{eq:resolvasy} and \eqref{eq:heatasy}.
For cone points in orbisurfaces this is, of course, well-known (even without the assumption of full rotational invariance of the metric); see, e.g., \cite{DGGW}.
\end{remark}

\Section{An explicit formula for $b_{1/2}(C)$}
\label{sec:formulab1}

\noindent
The explicit value of $b_0(C)=b_{0,m}(C)$ is well-known: By~\cite{BS87}, p.~424, interpreted in our notation (see also~\cite{Su17}, Lemma~4.1), one has:
$$b_0(C)=\frac1{12}\left(\frac1{f'(0)}-f'(0)\right)
$$
In this section, we will compute $b_{1/2}(C)$. Note that this is also the purpose of Proposition~3.5 in~\cite{Su17-2}, which does, however, not agree with the results that we obtain in Corollary~\ref{cor:b1total} and Theorem~\ref{thm:irrat} below. This will be in part, but not only, be a consequence of the difference between~\eqref{eq:Aprime0} and formula~(3.1) of~\cite{Su17-2}.

\begin{remark}
\label{rem:b1}
Let
$$L_0:=L_{b,0}=-\partial_r^2+r^{-2}A(0),
$$
the so-called ``frozen'' operator.  Its resolvent is
$$G_0:=G_{b,0}
$$
and can be written as
$$G_0(\zeta)=\bigoplus_{a\in\operatorname{spec}(A(0))}(-\partial_r^2 +r^{-2}a+\zeta^2)^{-1}\otimes\pi_a\,,
$$
where the sum is over all eigenvalues of $A(0)$, and the endomorphism $\pi_a$ of $L^2((0,\varepsilon)\times S^1)$ denotes fiberwise projection onto the
eigenspace~$E_a$ of the eigenvalue~$a$ of~$A(0)$; that is,
$$(\pi_a \phi)(r,\theta)=(\operatorname{proj}_{E_a}(\phi(r,\,.\,)))(\theta).
$$
{}From Lemma~4.3 of~\cite{BS87} it follows that
$${\partial_r}_{|r=0} G_{b,r}=-G_0 X^{-1/2} A'(0) X^{-1/2}G_0\,,
$$
where $X$ is the operator acting on functions on $(0,\varepsilon)\times S^1$ as multiplication by the first coordinate.
Since $A'(0)$ commutes with $L_0$ and, thus, with $G_0$, it follows that
\begin{equation}
\label{eq:firstderiv}
{\partial_r}_{|r=0}\operatorname{tr}_H G^m_{b,r}(1,1;\zeta)=-m\operatorname{tr}_H (G_0^{m+1}A'(0))(1,1;\zeta).
\end{equation}
Recall from~\eqref{eq:A0} that $A(0)\ge-\frac14$.
For each eigenvalue~$a$ of~$A(0)$ write
$$\nu(a):=\sqrt{a+\frac14}.
$$
By formula~(7.9) of~\cite{BS87}, the operator $(-\partial_r^2+r^{-2}a+\zeta^2)^{-m}$ has a kernel~$k^m_{\nu(a)}$ which is given on the diagonal by
$$k_{\nu(a)}^m(r,r;\zeta)=\frac1{(m-1)!}\left(-\frac1{2\zeta}\frac\partial{\partial\zeta}\right)^{m-1}r I_{\nu(a)}(r\zeta) K_{\nu(a)}(r\zeta),
$$
where $I_\nu$ and $K_\nu$ are the Bessel functions of the first and second kind, respectively.
In particular, 
$$k_{\nu(a)}^m(1,1;\zeta)=\frac1{(m-1)!}\left(-\frac1{2\zeta}\frac\partial{\partial\zeta}\right)^{m-1}I_{\nu(a)}(\zeta) K_{\nu(a)}(\zeta)
$$
and, by formula~(7.10) of~\cite{BS87},
$$\operatorname{tr}_H G_0^m(1,1;\zeta)=\sum_{a\in\operatorname{spec}(A(0))}
\frac1{(m-1)!}\left(-\frac1{2\zeta}\frac\partial{\partial\zeta}\right)^{m-1}I_{\nu(a)}(\zeta) K_{\nu(a)}(\zeta),
$$
where the eigenvalues of~$A(0)$ are repeated according to their multiplicity $\operatorname{dim}(E_a)$ from now on.
Note that on the image of~$\pi_a$, the operator $A'(0)=k_f A(0)$ acts as multiplication by the number $k_f a$, where $k_f=-f''(0)/f'(0)$ as in~\eqref{eq:Aprime0}.
Thus,
$$\operatorname{tr}_H (G_0^{m+1}A'(0))(1,1;\zeta)=\sum_{a\in\operatorname{spec}(A(0))}
\frac1{m!}\left(-\frac1{2\zeta}\frac\partial{\partial\zeta}\right)^{m}I_{\nu(a)}(\zeta) K_{\nu(a)}(\zeta) k_f a\,d\zeta.
$$
By~\eqref{eq:b-explicit} and~\eqref{eq:firstderiv}, this implies:
\begin{equation}
\label{eq:b1kf}
\begin{aligned}
b_{1,m}(C)&=\fint\zeta^{2m}\,{\partial_r}_{|r=0}\operatorname{tr}_H G_{b,r}^m(1,1;\zeta)\,d\zeta\\
&=-m\fint_0^\infty\zeta^{2m}\,\operatorname{tr}_H (G_0^{m+1}A'(0))(1,1;\zeta)\,d\zeta\\
&=-\frac{k_f}{(m-1)!}\fint_0^\infty\zeta^{2m}\left(-\frac1{2\zeta}\frac\partial{\partial\zeta}\right)^{m}\sum_{a\in\operatorname{spec}(A(0))}a\,I_{\nu(a)}(\zeta) K_{\nu(a)}(\zeta)\,d\zeta
\end{aligned}
\end{equation}
\end{remark}

\begin{lemma}
\label{lem:b1res}
\begin{equation*}
b_{1,m}(C)=-\frac{k_f}{(m-1)!}{\operatorname{Res}_0}_{|s=0}\left[\frac1{4\sqrt\pi}\Gamma\left(m+\frac12+\frac s2\right)\Gamma\left(-\frac s2\right)\sum_{a\in\operatorname{spec}(A(0))}
a\,\frac{\Gamma\left(\nu(a)+\frac12+\frac s2\right)}{\Gamma\left(\nu(a)+\frac12-\frac s2\right)}\right],
\end{equation*}
where the elements $a\in\operatorname{spec}(A(0))$ are counted with multiplicity.
\end{lemma}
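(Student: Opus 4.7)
\bigskip
\noindent\textbf{Proof proposal.}
The plan is to turn the regularized integral in the last displayed formula of Remark~\ref{rem:b1} into a residue at $s=0$ via~\eqref{eq:regint}, and then evaluate the resulting Mellin integral term by term in the spectral decomposition. Concretely, for each eigenvalue $a\in\operatorname{spec}(A(0))$ (counted with multiplicity) and each $s$ in a suitable strip, I would consider
\begin{equation*}
F_a(s):=\int_0^\infty \zeta^{s}\,\zeta^{2m}\Bigl(-\tfrac{1}{2\zeta}\tfrac{\partial}{\partial\zeta}\Bigr)^{\!m} I_{\nu(a)}(\zeta)K_{\nu(a)}(\zeta)\,d\zeta,
\end{equation*}
so that, after formally exchanging summation with~$\fint$, the identity to prove becomes
\begin{equation*}
b_{1,m}(C)=-\frac{k_f}{(m-1)!}\operatorname{Res_0}{}_{|s=0}\sum_{a\in\operatorname{spec}(A(0))} a\,F_a(s),
\end{equation*}
with each $F_a(s)$ matching the Gamma-function combination in the statement.

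First I would reduce the differential operator to a Gamma-ratio. The observation is that, writing $v=\zeta^2$, one has $-\tfrac{1}{2\zeta}\partial_\zeta=-\partial_v$, so that $\zeta^{2m}\bigl(-\tfrac{1}{2\zeta}\partial_\zeta\bigr)^m$ becomes $v^m(-\partial_v)^m$ acting in the variable~$v$. Changing variables to $v=\zeta^2$ in the Mellin integral and integrating by parts $m$ times in~$v$ (for $\operatorname{Re}(s)$ in a strip that makes all boundary contributions vanish, using the asymptotics $I_\nu K_\nu\sim\tfrac{1}{2\zeta}$ at infinity and $\sim\tfrac{1}{2\nu}$ at zero for $\nu>0$) yields the clean identity
\begin{equation*}
F_a(s)=\frac{\Gamma\!\left(\tfrac{s+1}{2}+m\right)}{\Gamma\!\left(\tfrac{s+1}{2}\right)}\int_0^\infty \zeta^{s} I_{\nu(a)}(\zeta)K_{\nu(a)}(\zeta)\,d\zeta.
\end{equation*}

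Next I would insert the classical Mellin transform of the Bessel product (a Weber--Schafheitlin-type formula, e.g. Gradshteyn--Ryzhik 6.576.4 or Erd\'elyi II, 10.3),
\begin{equation*}
\int_0^\infty \zeta^{s} I_\nu(\zeta)K_\nu(\zeta)\,d\zeta
=\frac{\Gamma\!\left(\tfrac{1+s}{2}\right)\Gamma\!\left(-\tfrac{s}{2}\right)\Gamma\!\left(\nu+\tfrac{1+s}{2}\right)}{4\sqrt\pi\,\Gamma\!\left(\nu+\tfrac{1-s}{2}\right)},
\end{equation*}
valid in the strip of convergence and meromorphically continued elsewhere. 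The factor $\Gamma\!\left(\tfrac{s+1}{2}\right)$ cancels, producing
\begin{equation*}
F_a(s)=\frac{1}{4\sqrt\pi}\,\Gamma\!\left(m+\tfrac12+\tfrac{s}{2}\right)\Gamma\!\left(-\tfrac{s}{2}\right)\,\frac{\Gamma\!\left(\nu(a)+\tfrac12+\tfrac{s}{2}\right)}{\Gamma\!\left(\nu(a)+\tfrac12-\tfrac{s}{2}\right)},
\end{equation*}
exactly the summand in the stated formula. Multiplying by~$a$, summing, and applying $-\tfrac{k_f}{(m-1)!}\operatorname{Res_0}_{|s=0}$ then gives the claim.

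The main obstacle is the justification of interchanging the sum over eigenvalues with the regularized integral. By the Weyl-type growth $\nu(a)\asymp a^{1/2}$ coming from~\eqref{eq:A0}, Stirling shows $a\,F_a(s)=O(\nu(a)^{2+\operatorname{Re}(s)})$ as $\nu(a)\to\infty$, so the spectral sum converges absolutely only for $\operatorname{Re}(s)<-3$, whereas we need to evaluate it at $s=0$. I would handle this by performing the calculation first in the half-plane of absolute convergence, where the Mellin transform, the integration by parts, and the interchange with the summation are all literally valid (the Singular Asymptotics Lemma already gives meromorphic continuation of $\int_0^\infty \zeta^s\sigma(r,\zeta/r)\,d\zeta$ near $s=0$; compare Remark~\ref{rem:naturebcm}(ii)), and then extend by unique meromorphic continuation to a neighbourhood of $s=0$. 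Once both sides are identified as meromorphic functions of~$s$, reading off the constant term of the Laurent expansion at $s=0$ concludes the proof.
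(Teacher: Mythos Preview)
Your argument is correct and follows essentially the same route as the paper: both pass from~\eqref{eq:b1kf} via~\eqref{eq:regint} to the Mellin transform $\int_0^\infty\zeta^{2m+s}\bigl(-\tfrac{1}{2\zeta}\partial_\zeta\bigr)^m I_\nu K_\nu\,d\zeta$ and identify it with the displayed Gamma combination. The only difference is that the paper quotes this Mellin identity directly as formula~(7.11) of~\cite{BS87}, whereas you rederive it by the substitution $v=\zeta^2$, $m$-fold integration by parts, and the classical Weber--Schafheitlin formula for $\int_0^\infty\zeta^s I_\nu K_\nu\,d\zeta$; your additional remarks on the strip of convergence and meromorphic continuation are a welcome clarification that the paper leaves implicit.
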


\begin{proof}
By formula~(7.11) of~\cite{BS87} (substituting $m+1$ for~$m$ in that formula), we have
\begin{equation*}
\int_0^\infty \zeta^w\left(-\frac1{2\zeta}\frac\partial{\partial\zeta}\right)^m I_\nu(\zeta)K_\nu(\zeta)\,d\zeta=
\frac1{4\sqrt\pi}\,\Gamma\left(\frac{w+1}2\right)\Gamma\left(m-\frac w2\right)\frac{\Gamma(\nu+\frac 12+\frac w2 -m)}{\Gamma(\nu +\frac 12 -\frac w2+m)}
\end{equation*}
For $w=2m+s$, this gives:
$$\int_0^\infty \zeta^{2m+s}\left(-\frac1{2\zeta}\frac\partial{\partial\zeta}\right)^m I_\nu(\zeta)K_\nu(\zeta)\,d\zeta=
\frac1{4\sqrt\pi}\,\Gamma\left(m+\frac 12+\frac s2\right)\Gamma\left(-\frac s2\right)\frac{\Gamma(\nu+\frac 12+\frac s2)}{\Gamma(\nu +\frac 12 -\frac s2)}
$$
Together with~\eqref{eq:b1kf} and \eqref{eq:regint}, this implies the statement. 
\end{proof}

\begin{remark}
\label{rem:polespec}

(i)
In view of the factor $\Gamma(-\frac s2)$ in the formula from Lemma~\ref{lem:b1res}, the fact that $\Gamma$ has a pole at $s=0$ plays an important role for the subsequent computation of $b_{1,m}(C)$. Note that this is a very different situation compared to
formula~(7.12) of~\cite{BS87}, which gives a similar expression
concerning (in our notation) $b_{0,m}(C)$. As it is, the pole of $\Gamma$ at $s=0$ seems to have been ignored in the computation starting in the lower part of p.~9 in~\cite{Su17-2}. In that paper, $b_{1,m}(C)$ is claimed to be equal to $\frac{f''(0)}{f'(0)}\cdot\frac{5\Gamma(m+\frac12)}{96\sqrt\pi(m-1)!}$ (see p.~9 of~\cite{Su17-2}), that is, $-k_f\cdot\frac{5\Gamma(m+\frac12)}{96\sqrt\pi(m-1)!}$ (in our notation). This would mean that $b_{1/2}(C)$ were equal to $\frac{f''(0)}{f'(0)}\cdot\frac5{96\sqrt\pi}$. Note that this expression is constant under constant rescalings of~$f$ and contradicts our formula for $b_{1,m}(C)$ in Corollary~\ref{cor:b1total} below, as well as its irrational dependence on~$f'(0)$ described in Section~\ref{sec:irrat}.

(ii)
The numbers $\nu(a)=\sqrt{a+\frac14}$, when $a$ runs through $\operatorname{spec}(A(0))$ (with multiplicities) constitute the series
$$0, \alpha, \alpha, 2\alpha, 2\alpha, 3\alpha, 3\alpha, 4\alpha, 4\alpha, \dots
$$
This follows immediately from~\eqref{eq:A0}, recalling that $\alpha=\frac1{f'(0)}$.

(iii)
In the following we will use the Beta function
$$B(z,w)=\frac{\Gamma(z)\Gamma(w)}{\Gamma(z+w)}=\int_0^1 (1-t)^{z-1} t^{w-1}\,dt,
$$
considered as a meromorphic function in~$z\in\mathbb{C}$ for each fixed $w\in\mathbb{C}\setminus\{0,-1,-2,\dots\}$, and vice versa.
The integral on the right side converges only for $\operatorname{Re}(z)>0$ and $\operatorname{Re}(w)>0$, but can, in other cases, be interpreted using the above formula.

(iv)
The functional equation $\Gamma(z+1)=z\Gamma(z)$ implies functional equations for the Beta function, for example:
$$B(z,w)=\frac{z+w}{w} B(z,w+1) \mbox{ and } B(z+1,w-1)=\frac z{w-1}B(z,w)
$$
\end{remark}

\begin{corollary}
\label{cor:b1resshort}
Using $a=\nu(a)^2-\frac14$, we conclude from Lemma~\ref{lem:b1res} and Remark~\ref{rem:polespec}(ii):
\begin{equation*}
b_{1,m}(C)=-\frac{k_f}{4\sqrt{\pi}(m-1)!}{\operatorname{Res}_0}_{|s=0}\left[\psi_m(s)\cdot h_\alpha(s)\right],
\end{equation*}
where $\psi_m$ is the meromorphic function given by
$$\psi_m(s):=\frac{\Gamma(m+\frac12+\tfrac s2)\Gamma(-\frac s2)}{\Gamma(-s)},
$$
and
\begin{equation*}
\begin{aligned}
h_\alpha(s):=&{}-\frac14\cdot\frac{\Gamma(\frac12+\frac s2)\Gamma(-s)}{\Gamma(\frac12 - \frac s2)}
     +2\sum_{n=1}^\infty \left(\alpha^2 n^2-\frac14\right)\frac{\Gamma(n\alpha+\frac12+\frac s2)\Gamma(-s)}{\Gamma(n\alpha+\frac12-\frac s2)}\\
\end{aligned}
\end{equation*}
\end{corollary}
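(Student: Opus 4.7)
The plan is to derive the claimed identity directly from Lemma~\ref{lem:b1res} by substituting the eigenvalue data from Remark~\ref{rem:polespec}(ii) and then performing a purely algebraic regrouping of the Gamma factors. No new analytic input is required.

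First, I will use Remark~\ref{rem:polespec}(ii) to expand the spectral sum in Lemma~\ref{lem:b1res}. Counting multiplicities, the sequence $\nu(a)$ equals $0, \alpha, \alpha, 2\alpha, 2\alpha, \dots$, so the eigenvalue $a = -\tfrac14$ (corresponding to $\nu=0$) contributes once, while for each $n\ge 1$ the eigenvalue $a = n^2\alpha^2 - \tfrac14$ contributes twice. Substituting $a = \nu(a)^2 - \tfrac14$, the inner sum becomes
\[
-\frac14\,\frac{\Gamma(\tfrac12+\tfrac s2)}{\Gamma(\tfrac12-\tfrac s2)}
+ 2\sum_{n=1}^\infty \left(n^2\alpha^2-\frac14\right)\frac{\Gamma(n\alpha+\tfrac12+\tfrac s2)}{\Gamma(n\alpha+\tfrac12-\tfrac s2)},
\]
interpreted, in the region near $s=0$, via meromorphic continuation of the Mellin-type sum as in \eqref{eq:regint}.

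Next, I will split the overall prefactor $\Gamma(m+\tfrac12+\tfrac s2)\Gamma(-\tfrac s2)$ by multiplying and dividing by $\Gamma(-s)$. The quotient $\Gamma(m+\tfrac12+\tfrac s2)\Gamma(-\tfrac s2)/\Gamma(-s)$ is by definition $\psi_m(s)$, and distributing the compensating $\Gamma(-s)$ into each term of the spectral sum produces exactly the expression $h_\alpha(s)$ written in the statement. Inserting this into Lemma~\ref{lem:b1res} gives the claim.

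The main piece of bookkeeping to check is the local behavior at $s=0$. Since $\Gamma(-\tfrac s2)$ and $\Gamma(-s)$ each have a simple pole at $s=0$ with $\Gamma(-\tfrac s2)/\Gamma(-s)\to 2$, the factor $\psi_m$ is holomorphic at $s=0$ with $\psi_m(0)=2\Gamma(m+\tfrac12)$; the simple pole at $s=0$ of the original bracketed expression is thus transferred entirely into $h_\alpha(s)$ via the factor $\Gamma(-s)$, while each ratio $\Gamma(\nu+\tfrac12+\tfrac s2)/\Gamma(\nu+\tfrac12-\tfrac s2)$ remains holomorphic near $s=0$ for $\nu\ge 0$. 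This redistribution is the only subtle point, and it is what sets up the next step of the paper (a careful residue analysis of $h_\alpha$); beyond this, the corollary follows by direct substitution.
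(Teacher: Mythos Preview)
Your argument is correct and is exactly the derivation the paper intends: substitute the spectral data from Remark~\ref{rem:polespec}(ii) into Lemma~\ref{lem:b1res}, then multiply and divide by $\Gamma(-s)$ to split the bracket into $\psi_m(s)\cdot h_\alpha(s)$. The paper treats this as immediate (no separate proof is given), and your additional remark on the pole redistribution at $s=0$ anticipates Remark~\ref{rem:psismooth}.
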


\begin{remark}
\label{rem:psismooth}
Note that $\psi_m$ has no pole at $s=0$. Thus, it is holomorphic in some open neighborhood of $s=0$. Its value at $s=0$ is
$$\psi_m(0)=2\,\Gamma(m+\tfrac12)
$$
The function~$h_\alpha$ is meromorphic in some neighborhood of $s=0$. We are going to show that~$h_\alpha$ does actually not have a pole in $s=0$, either; see Corollary~\ref{cor:b1total}.
\end{remark}

\begin{definition}
\label{def:hka}
For each $\alpha>0$, $k\in\mathbb{N}_0$, and $z$ in
\begin{equation}
\label{eq:W}
W:=\{z\in\mathbb{C}: |z|<1 \mbox{\ and\ }|1-z|<1\}
\end{equation}
we write
\begin{equation*}
h_{k,\alpha}(z):=\sum_{n=0}^\infty \alpha^k n^k (1-z)^{n\alpha},
\end{equation*}
where $(1-z)^{n\alpha}:=\exp(n\alpha\log(1-z))$; here, $\log$ denotes the main branch of the complex logarithm.
\end{definition}

Note that for $n=0$, the summand $\alpha^k n^k (1-z)^{n\alpha}$ of $h_{k,\alpha}(z)$ equals $1$ for $k=0$ (and $0$ for $k>0$). Taking this into account, we obtain (noting that the term $+\frac14$ in the bracket of the following equation serves for achieving the correct coefficient $-\frac14$ in the first term of $h_\alpha(s)$ from Corollary~\ref{cor:b1resshort}):

\begin{corollary}
\label{cor:b1reshka}
For the function~$h_\alpha$ from Corollary~\ref{cor:b1resshort} we get, using the monotone convergence theorem for the integral over $(0,1)$,
\begin{equation*}
h_\alpha(s)= \int_0^1\left(2h_{2,\alpha}(t)-\frac12 h_{0,\alpha}(t)+\frac14\right)(1-t)^{\frac s2-\frac 12} t^{-s-1}\,dt.
\end{equation*}
\end{corollary}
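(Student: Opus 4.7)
The plan is to convert each of the Gamma function ratios appearing in the formula for $h_\alpha(s)$ from Corollary~\ref{cor:b1resshort} into a Beta integral over~$(0,1)$, swap sum and integral, and recognize the resulting series in the integration variable~$t$ as the bracket $2h_{2,\alpha}(t)-\tfrac12 h_{0,\alpha}(t)+\tfrac14$.

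Concretely, I would invoke the Beta identity from Remark~\ref{rem:polespec}(iii),
$$\frac{\Gamma(z+\tfrac{s}{2})\Gamma(-s)}{\Gamma(z-\tfrac{s}{2})}=B\!\left(z+\tfrac{s}{2},-s\right)=\int_0^1(1-t)^{z+\frac{s}{2}-1}\,t^{-s-1}\,dt,$$
applied at $z=\tfrac12$ for the isolated term and at $z=n\alpha+\tfrac12$ for each summand with $n\ge 1$. Every resulting integrand then carries the common factor $(1-t)^{s/2-1/2}t^{-s-1}$, multiplied by $-\tfrac14$ in the first case and by $2(\alpha^2 n^2-\tfrac14)(1-t)^{n\alpha}$ in the second. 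Pulling the integral outside the sum yields the integrand
$$\Bigl(-\tfrac14+2\sum_{n=1}^\infty(\alpha^2 n^2-\tfrac14)(1-t)^{n\alpha}\Bigr)(1-t)^{s/2-1/2}\,t^{-s-1},$$
and, using $h_{0,\alpha}(t)=1+\sum_{n\ge1}(1-t)^{n\alpha}$ together with $h_{2,\alpha}(t)=\sum_{n\ge1}\alpha^2 n^2(1-t)^{n\alpha}$, the bracket simplifies to $-\tfrac14+2h_{2,\alpha}(t)-\tfrac12(h_{0,\alpha}(t)-1)=2h_{2,\alpha}(t)-\tfrac12 h_{0,\alpha}(t)+\tfrac14$, which is the claim.

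For the interchange of sum and integration I would work in a strip of~$s$ where the series from Corollary~\ref{cor:b1resshort} converges absolutely. By Stirling, $\Gamma(n\alpha+\tfrac12+\tfrac{s}{2})/\Gamma(n\alpha+\tfrac12-\tfrac{s}{2})\sim(n\alpha)^{s}$, so the $n$-th summand is $O(n^{2+s})$ and absolute convergence holds for $\operatorname{Re}(s)<-3$. When $\alpha\ge\tfrac12$ every coefficient $\alpha^2 n^2-\tfrac14$ with $n\ge1$ is non-negative, and the monotone convergence theorem applies directly to the partial sums of non-negative integrands on~$(0,1)$; for $\alpha<\tfrac12$ the finitely many terms with $\alpha^2 n^2-\tfrac14<0$ are split off by linearity, leaving a monotone tail. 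Both sides then extend meromorphically in~$s$, giving the identity wherever it is later needed.

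The main obstacle I anticipate is that no single strip makes both sides literally converge as a Lebesgue integral: the bracket has a pole of order three at $t=0$ (with leading term $4/(\alpha t^3)$, not cancelled by the $-\tfrac12 h_{0,\alpha}$ subtraction) and a finite nonzero limit $-\tfrac14$ at $t=1$, so Lebesgue integrability at the two ends would demand the mutually exclusive conditions $\operatorname{Re}(s)<-3$ and $\operatorname{Re}(s)>-1$. The identity must therefore be read as an equality of meromorphic functions of~$s$, the right-hand side being interpreted via the Mellin continuation of the two half-integrals (for instance after splitting at $t=\tfrac12$); the monotone convergence step establishes the equality in the region where the term-by-term Beta identity is literally Lebesgue-valid, and meromorphic continuation delivers it near $s=0$, where it is used in the subsequent computation of the residue.
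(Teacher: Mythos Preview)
Your proposal is correct and follows exactly the route the paper intends: the paper gives no separate proof for this corollary beyond the parenthetical ``using the monotone convergence theorem for the integral over~$(0,1)$'', so your use of the Beta identity from Remark~\ref{rem:polespec}(iii) term by term, followed by the interchange of sum and integral, is precisely the argument being invoked.

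Your discussion of the convergence obstacle is a genuine addition rather than a deviation. The paper is tacitly relying on the remark in~\ref{rem:polespec}(iii) that the integral ``can, in other cases, be interpreted using the above formula'', i.e.\ the right-hand side is to be read as a sum of analytically continued Beta functions rather than as a single Lebesgue integral. Your observation that no value of~$s$ makes the combined integrand Lebesgue-integrable on~$(0,1)$ (pole of order three at $t=0$ versus the $(1-t)^{s/2-1/2}$ singularity at $t=1$) is accurate, and your proposed resolution---establish the identity termwise where each Beta integral is literal, use monotone convergence on the non-negative tail, then invoke meromorphic continuation---is the right way to make the statement rigorous. This is more care than the paper takes at this point; the paper effectively defers the issue and only produces a genuinely convergent integral later, in Proposition~\ref{prop:hpos}, after the singular parts have been stripped off in Lemma~\ref{lem:hka-sing}.
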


\begin{notrems}
\label{notrems:hka}
Let $\alpha>0$.

(i)
For each $z\in W$ (which was defined in~\eqref{eq:W}) we have
\begin{equation*}
\begin{aligned}
h_{0,\alpha}(z)&=\sum_{n=0}^\infty ((1-z)^\alpha)^n=\frac 1{1-(1-z)^\alpha}=
\frac1{\alpha z-\binom\alpha 2 z^2+\binom\alpha 3 z^3-\ldots}\\
&=\frac1{\alpha z}\cdot \frac1{1-Q_\alpha(z)}=\frac1{\alpha z}+\frac12\left(1-\frac1\alpha\right)+O(z),
\end{aligned}
\end{equation*}
where
$$Q_\alpha(z)=\frac12(\alpha-1)z-\frac16(\alpha-1)(\alpha-2)z^2+\ldots
$$
is a power series vanishing in $z=0$ and converging for each $|z|<1$.
Thus, $h_{0,\alpha}$ has a meromorphic extension, denoted $h_{0,\alpha}$ again, to the open unit disc
and has a simple pole at $z=0$ with residue~$\frac1\alpha$.
In particular, the function
$$
z\mapsto h_{0,\alpha}(z)-\frac 1{\alpha z}=h_{0,\alpha}(z)-\frac1\alpha h_{0,1}(z)
$$
is holomorphic in some open neighborhood of $z=0$.

(ii)
More precisely, the meromorphic function $h_{0,\alpha}$ on the open unit disc has a pole in~$z$ if and only if $|1-z|=1$ and
$$\log(1-z)\in i\cdot\left(\tfrac{2\pi}\alpha\mathbb{Z}\,\cap\,(-\tfrac\pi3,\tfrac\pi3)\,\right)
$$
(in fact, if $|1-z|=1$ then the condition $|z|<1$ is equivalent to $\log(1-z)\in i\,(-\frac\pi3,\frac\pi3)$).
For $\alpha\le6$, this is satsfied only for~$z=0$. For $\alpha>6$, there are additional poles in the open unit disc. Among these, the ones with the smallest nonzero distance
to $z=0$ are the points $1-\exp(\pm\frac{2\pi i}\alpha)$ with norm
 $2\left|\sin(\frac\pi\alpha)\right|$. Thus, the function $z\mapsto h_{0,\alpha}(z) - \frac1{\alpha z}$ is holomorphic on the open disc
\begin{equation}
\label{eq:valpha}
V_\alpha:=\{z\in\mathbb{C}:|z|<r_\alpha\},\text{ where }r_\alpha:=\begin{cases}1 & \text{ if }0<\alpha<6,\\
2\left|\sin\tfrac\pi\alpha\right| & \text{ if }\alpha\ge6,
\end{cases}
\end{equation}
and the above function is still holomorphic on
\begin{equation}
\label{eq:walpha}
W_\alpha:=W\cup V_\alpha.
\end{equation}

(iii)
For each $k\in\mathbb{N}_0$ and $z\in W$, we have
$$\frac d{dz} h_{k,\alpha}(z)=\sum_{n=1}^\infty\alpha^k n^k\cdot n\alpha(1-z)^{n\alpha-1}\cdot(-1)
= -\frac1{1-z} h_{k+1,\alpha}(z).
$$
Hence,
\begin{equation}
\label{eq:hka-rec}
h_{k,\alpha}(z)=\left(-(1-z)\frac d{dz}\right)^k h_{0,\alpha}(z).
\end{equation}
Consequently, (i) implies that for {\it each\/} $k\in\mathbb{N}_0$, the function $h_{k,\alpha}$ has a meromorphic extension
to the open unit disc, with the same set of poles as~$h_{0,\alpha}$. Moreover, since $h_{0,\alpha}(z)-\frac1{\alpha z}$ is holomorphic near $z=0$, it follows that
for each $k\in\mathbb{N}_0$, the singular part $h_{k,\alpha}^{\textrm{sing}}(z)$ of the Laurent series of $h_{k,\alpha}(z)$ at $z=0$ is given by
\begin{equation}
\label{eq:hka-singrec}
\begin{aligned}
h_{k,\alpha}^{\textrm{sing}}(z)=\frac1\alpha\left(-(1-z)\frac d{dz}\right)^k\left(\frac1z\right)
=\frac1\alpha h_{k,1}(z)=\frac1\alpha h_{k,1}^{\textrm{sing}}(z)
\end{aligned}
\end{equation}

(iv)
We denote the regular part of $h_{k,\alpha}(z)$ with respect to the pole at $z=0$ by
$$h_{k,\alpha}^{\textrm{reg}}(z):=h_{k,\alpha}(z)-h_{k,\alpha}^{\textrm{sing}}(z)=h_{k,\alpha}(z)-\frac1\alpha h_{k,1}(z).
$$
This is again a holomorphic function on the open set~$W_\alpha$ from~\eqref{eq:walpha}. Equation~\eqref{eq:hka-rec} implies that $h_{k,\alpha}^{\textrm{reg}}$ satisfies the analogous equation
\begin{equation}
\label{eq:hka-regrec}
h_{k,\alpha}^{\textrm{reg}}(z)=\left(-(1-z)\frac d{dz}\right)^k h_{0,\alpha}^{\textrm{reg}}(z).
\end{equation}

(v)
For future use, we also introduce the following functions:
$$h_{k,\alpha}^{\textrm{pos}}(z):=h_{k,\alpha}^{\textrm{reg}}(z)-h_{k,\alpha}^{\textrm{reg}}(0)
$$
and
$$\hat h_{k,\alpha}(z):=\frac{h_{k,\alpha}^{\textrm{pos}}(z)}z
$$
\end{notrems}
These are again holomorphic on the open set~$W_\alpha$ from~\eqref{eq:walpha}.

(vi)
Note that $h_{0,\alpha}(z)$ for general $\alpha>0$ is not defined on any open neighborhood of $z=1$. However, it
extends continuously (with value $h_{0,\alpha}(1)=1$) to the limit point $z=1$ of the open set $W\subset W_\alpha$. 
Using equation~\eqref{eq:hka-rec}, one easily sees that {\it each\/} of the functions $h_{k,\alpha}$ extends continuously to $z=1$ (with value
$h_{k,\alpha}(1)=0$ for $k>0$). Obviously, continuous extendability in $z=1$ now follows for each of the functions $h_{k,\alpha}^{\textrm{reg}}$ and $\hat h_{k,\alpha}$, too.
In particular, for each $k\in\mathbb{N}_0$ and $\alpha>0$, we obtain a continuous function
$$\hat h_{k,\alpha}:[0,1]\to\mathbb{R}
$$
on the closed unit interval (which coincides with the restriction of the holomorhpic function $\hat h_{k,\alpha}:W_\alpha\to\mathbb{C}$ on the half-open
interval $[0,1)$\,).

\begin{lemma}
\label{lem:hka-sing}
For each $\alpha>0$ we have:
\begin{itemize}
\item[(i)]
$$\int_0^1 h_{0,\alpha}^{\textrm{sing}}(t)(1-t)^{\frac s2-\frac12}t^{-s-1}\,dt
=\frac 1{2\alpha}\int_0^1 (1-t)^{\frac s2 - \frac12}t^{-s-1}\,dt.
$$

\item[(ii)]
The function 
$$s\mapsto \int_0^1 h_{2,\alpha}^{\textrm{sing}}(t)(1-t)^{\frac s2-\frac12}t^{-s-1}\,dt
$$
vanishes identically in $s\in\mathbb{C}$.
\end{itemize}
\end{lemma}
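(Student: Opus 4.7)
The plan is to reduce each assertion to a short Beta-function identity, after explicitly identifying the singular parts $h_{0,\alpha}^{\textrm{sing}}$ and $h_{2,\alpha}^{\textrm{sing}}$ via equation~\eqref{eq:hka-singrec}. Throughout, each integral $\int_0^1 g(t)(1-t)^{s/2-1/2}t^{-s-1}\,dt$ is to be read as a meromorphic function of $s\in\mathbb{C}$ obtained by analytic continuation, in accordance with the Mellin-regularization conventions of Remark~\ref{rem:polespec}(iii); concretely, any monomial $t^{-k}(1-t)^j$ occurring in $h_{k,\alpha}^{\textrm{sing}}(t)$ contributes a shifted Beta function.

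My first step will be to read off from~\eqref{eq:hka-singrec} that $h_{0,\alpha}^{\textrm{sing}}(z)=\tfrac{1}{\alpha z}$, and to compute $h_{2,\alpha}^{\textrm{sing}}(z)=\tfrac{1}{\alpha}\cdot\tfrac{(1-z)(2-z)}{z^3}$ by applying the operator $-(1-z)\tfrac{d}{dz}$ twice to $\tfrac{1}{z}$. These two explicit expressions are all that is needed from the preliminary setup.

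For part (i), substitution yields $\tfrac{1}{\alpha}\int_0^1(1-t)^{s/2-1/2}t^{-s-2}\,dt=\tfrac{1}{\alpha}B\bigl(\tfrac{s}{2}+\tfrac12,-s-1\bigr)$. Applying the functional equation $B(z,w)=\tfrac{z+w}{w}B(z,w+1)$ from Remark~\ref{rem:polespec}(iv) with $z=\tfrac{s}{2}+\tfrac12$ and $w=-s-1$ produces the factor $\tfrac{-s/2-1/2}{-s-1}=\tfrac12$, giving $\tfrac{1}{2\alpha}B\bigl(\tfrac{s}{2}+\tfrac12,-s\bigr)=\tfrac{1}{2\alpha}\int_0^1(1-t)^{s/2-1/2}t^{-s-1}\,dt$, which is exactly the right-hand side.

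For part (ii), substitution yields $\tfrac{1}{\alpha}\bigl[2B\bigl(\tfrac{s}{2}+\tfrac32,-s-3\bigr)-B\bigl(\tfrac{s}{2}+\tfrac32,-s-2\bigr)\bigr]$ after expanding $(1-t)(2-t)=2(1-t)^2-(1-t)\cdot t\cdot(\text{shift})$ in the numerator. Applying the same functional equation with $z=\tfrac{s}{2}+\tfrac32$ and $w=-s-3$ gives $B\bigl(\tfrac{s}{2}+\tfrac32,-s-3\bigr)=\tfrac12 B\bigl(\tfrac{s}{2}+\tfrac32,-s-2\bigr)$, so the bracket collapses to $B\bigl(\tfrac{s}{2}+\tfrac32,-s-2\bigr)-B\bigl(\tfrac{s}{2}+\tfrac32,-s-2\bigr)=0$, the desired identical vanishing. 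I do not anticipate any real obstacle; the only point requiring a little care is the interpretation of the integrals for $s$ near $0$, where they diverge at $t=0$ as literally written. The cancellation in (ii) is precisely the mechanism that will prevent $h_\alpha(s)$ from inheriting the higher-order pole of $h_{2,\alpha}^{\textrm{sing}}$ at $t=0$, paving the way for the regularity of $h_\alpha$ at $s=0$ that is needed in Corollary~\ref{cor:b1total}.
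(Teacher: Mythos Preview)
Your proof is correct and follows essentially the same route as the paper: identify $h_{0,\alpha}^{\textrm{sing}}$ and $h_{2,\alpha}^{\textrm{sing}}$ explicitly via~\eqref{eq:hka-singrec}, then reduce each integral to a short Beta-function computation using the functional equation from Remark~\ref{rem:polespec}(iv). The only cosmetic difference is that in~(ii) you split $(1-t)(2-t)/t^3$ as $(1-t)(2t^{-3}-t^{-2})$, giving $2B(\tfrac s2+\tfrac32,-s-3)-B(\tfrac s2+\tfrac32,-s-2)$, whereas the paper splits it as $2(1-t)^2/t^3+(1-t)/t^2$, giving $2B(\tfrac s2+\tfrac52,-s-3)+B(\tfrac s2+\tfrac32,-s-2)$; your verbal description of the expansion is a bit garbled (the sign and the ``shift'' remark don't match the formulas), but the Beta terms you actually write down are correct and the cancellation goes through exactly as you say.
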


\begin{proof}

(i)
By Remark~\ref{notrems:hka}(i),
\begin{equation}
\label{eq:h0sing}
h_{0,\alpha}^{\textrm{sing}}(z)=\frac1{\alpha z}.
\end{equation}
Thus, the left hand side of the statement equals
$$\alpha^{-1}B\left(\frac s2+\frac12,-s-1\right)
=\alpha^{-1}\cdot\frac{\frac s2+\frac12-s-1}{-s-1}B\left(\frac s2+\frac12,-s\right)=\frac1{2\alpha}B\left(\frac s2+\frac12,-s\right),
$$
where we have used Remark~\ref{rem:polespec}(iv) in the first equation.

(ii)
By equation~\eqref{eq:hka-singrec},
\begin{equation*}
\alpha h_{2,\alpha}^{\textrm{sing}}(z)=\left(-(1-z)\frac d{dz}\right)\left((1-z)\frac1{z^2}\right)=(1-z)^2\frac2{z^3}+(1-z)\frac1{z^2}
\end{equation*}
Thus, 
\begin{equation*}
\begin{aligned}
\int_0^1 \alpha h_{2,\alpha}^{\textrm{sing}}(t)(1-t)^{\frac s2-\frac12}t^{-s-1}\,dt
&=2B\left(\frac s2+\frac 52, -s-3\right)+B\left(\frac s2+\frac 32,-s-2\right)
\\&=\left[2\frac{\frac s2+\frac 32}{-s-3}+1\right]B\left(\frac s2+\frac 32,-s-2\right)
=0,
\end{aligned}
\end{equation*}
where Remark~\ref{rem:polespec}(iv) is used in the last equation.
\end{proof}

\begin{definition}
\label{def:powera}

(i)
For each $\alpha>0$ we define a complex power series $P_\alpha(w)$ by
$$P_\alpha(w):=\sum_{j=0}^\infty\frac1{(j+1)!}B_{j+1}\alpha^{j+1} w^j,
$$
where the $B_j$ denote the Bernoulli numbers. The radius of convergence of~$P_\alpha$ is
$$R_\alpha:=\frac{2\pi}\alpha\,.
$$
We write
$$U_\alpha:=\{z\in\mathbb{C}: |z|<1 \text{ and } |\log(1-z)|<R_\alpha\}.
$$

(ii)
We define a families of holomorphic functions $\Phi_k$ and~$\hat\Phi_k$ ($k\in\mathbb{N}_0$) on the open unit disc $\{z\in\mathbb{C}:|z|<1\}$ by
\begin{equation*}
\begin{aligned}
\Phi_k(z)&:=\left(-(1-z)\frac d{dz}\right)^k\left(\frac 1z+\frac 1{\log(1-z)}\right),\\
\hat\Phi_k(z)&:=\frac{\Phi_k(z)-\Phi_k(0)}z\,.
\end{aligned}
\end{equation*}

\end{definition}

\begin{remark}
\label{rem:Pk-phicont}

(i)
The $k$th derivative $P^{(k)}$ of $P_\alpha$ satisfies
\begin{equation}
\label{eq:Pka}
P^{(k)}_\alpha(w)=\sum_{j=0}^\infty\frac1{j!\cdot(j+k+1)}B_{j+k+1}\alpha^{j+k+1}w^j
\end{equation}
for all $w\in\mathbb{C}$ with $|w|<R_\alpha$\,.

(ii)
The functions $\Phi_k$ are indeed holomorphic on the open unit disc because the singularity of~$\Phi_0$ at $z=0$ is removable (with value $\Phi_0(0)=\frac12$). Consequently, the functions $\hat\Phi_k$, too, are holomorphic on the open unit disc.
Moreover, we note that the functions~$\Phi_k$ continuously extend to the limit point $z=1$ (with value~$1$ for $k=0$ and value~$0$ for~$k>0$). Consequently, the functions~$\hat\Phi_k$, too,  continuously extend to the point $z=1$.
In particular, $\Phi_k$ and~$\hat\Phi_k$ induce continuous functions $\Phi_k:[0,1]\to\mathbb{R}$ and $\hat\Phi_k:[0,1]\to\mathbb{R}$, respectively.
\end{remark}

\begin{lemma}
\label{lem:hka-taylor}
Using the notation of~\ref{notrems:hka},
we have the following for each $\alpha>0$:
\begin{itemize}
\item[(i)]
For all $z\in U_\alpha$\,,
\begin{equation*}
\begin{aligned}
h_{0,\alpha}(z)&=-\frac1{\alpha\log(1-z)}-\frac1{\alpha}P_\alpha(\log(1-z))\text{ and }\\
h_{0,\alpha}^{\textrm{reg}}(z)&=-\frac1{\alpha}\Phi_0(z)-\frac1\alpha P_\alpha(\log(1-z)).
\end{aligned}
\end{equation*}
Moreover, for all $z\in U_\alpha\cap U_1$\,,
$$h_{0,\alpha}^{\textrm{reg}}(z)=\frac1\alpha(P_1-P_\alpha)(\log(1-z)).
$$

\item[(ii)]
For each $k\in\mathbb{N}_0$ and all $z\in U_\alpha$\,,
\begin{equation}
\label{eq:hka-reg}
h_{k,\alpha}^{\textrm{reg}}(z)=-\frac1\alpha\Phi_k(z)-\frac1\alpha P_\alpha^{(k)}(\log(1-z)).
\end{equation}
Moreover, for all $z\in U_\alpha\cap U_1$\,,
\begin{equation*}
\begin{aligned}
h_{k,\alpha}^{\textrm{reg}}(z)&=\frac1\alpha(P^{(k)}_1-P^{(k)}_\alpha)(\log(1-z))\\
&=\frac1\alpha\sum_{j=0}^\infty\frac1{j!\cdot(j+k+1)}B_{j+k+1}(1-\alpha^{j+k+1})(\log(1-z))^j.
\end{aligned}
\end{equation*}
In particular,
\begin{equation}
\label{eq:hka-reg-0}
h_{k,\alpha}^{\textrm{reg}}(0)=\frac1\alpha\cdot\frac1{k+1}B_{k+1}(1-\alpha^{k+1}).
\end{equation}

\item[(iii)]
For each $k\in\mathbb{N}_0$ and all $z\in U_\alpha$\,,
\begin{equation}
\label{eq:hka-hat}
\begin{aligned}
\hat h_{k,\alpha}(z)&=-\frac1\alpha\hat\Phi_k(z)-\frac1{\alpha z} (P_\alpha^{(k)}(\log(1-z))-P_\alpha^{(k)}(0))\\
&=-\frac1\alpha\hat\Phi_k(z)-\frac1{\alpha z}\sum_{j=1}^\infty\frac1{j!\cdot(j+k+1)}B_{j+k+1}\alpha^{j+k+1}(\log(1-z)^j).
\end{aligned}
\end{equation}
\end{itemize}
\end{lemma}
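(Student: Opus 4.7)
The plan is to derive part~(i) directly from the Bernoulli-number generating function, and then obtain parts~(ii) and~(iii) by applying the operator $\mathcal{D}:=-(1-z)\tfrac{d}{dz}$ iteratively, together with simple subtractions.

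\textbf{Part (i).}  Substitute $w:=\log(1-z)$, so that $(1-z)^\alpha=e^{\alpha w}$ and
$$h_{0,\alpha}(z)=\frac{1}{1-(1-z)^\alpha}=-\frac{1}{e^{\alpha w}-1}.$$
The familiar identity $\frac{x}{e^x-1}=\sum_{j=0}^\infty\frac{B_j}{j!}x^j$ applied with $x=\alpha w$ gives, for $|\alpha w|<2\pi$,
$$-\frac{1}{e^{\alpha w}-1}=-\frac{1}{\alpha w}-\sum_{j=0}^\infty\frac{B_{j+1}}{(j+1)!}\alpha^j w^j=-\frac{1}{\alpha w}-\frac{1}{\alpha}P_\alpha(w),$$
which is the first formula of~(i), valid precisely on $U_\alpha$.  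Subtracting the singular part $h_{0,\alpha}^{\textrm{sing}}(z)=1/(\alpha z)$ from~\eqref{eq:h0sing} and recognizing $\frac{1}{z}+\frac{1}{\log(1-z)}=\Phi_0(z)$ yields the second formula.  For the third, I would specialize to $\alpha=1$: since $h_{0,1}(z)=1/z=h_{0,1}^{\textrm{sing}}(z)$, one has $h_{0,1}^{\textrm{reg}}\equiv 0$, which together with the second formula forces $\Phi_0(z)=-P_1(\log(1-z))$ on $U_1$.  Substituting this back into the second formula for general~$\alpha$ produces the third formula on $U_\alpha\cap U_1$.

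\textbf{Part (ii).}  Under $w=\log(1-z)$ one has $dw/dz=-1/(1-z)$, hence $\mathcal{D}=d/dw$.  Therefore $\mathcal{D}^k\!\left(P_\alpha(\log(1-z))\right)=P_\alpha^{(k)}(\log(1-z))$, while $\mathcal{D}^k\Phi_0=\Phi_k$ by definition.  Applying $\mathcal{D}^k$ to the second formula of~(i) and using $h_{k,\alpha}^{\textrm{reg}}=\mathcal{D}^k h_{0,\alpha}^{\textrm{reg}}$ from~\eqref{eq:hka-regrec} gives~\eqref{eq:hka-reg}.  The same operator applied to the third formula of~(i) yields the joint $(P_1^{(k)}-P_\alpha^{(k)})$-expression on $U_\alpha\cap U_1$, and the power-series form follows from~\eqref{eq:Pka}.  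Evaluating the latter at $z=0$, where $\log(1-z)=0$, picks out the constant term $P_\alpha^{(k)}(0)=B_{k+1}\alpha^{k+1}/(k+1)$ respectively $P_1^{(k)}(0)=B_{k+1}/(k+1)$, which delivers~\eqref{eq:hka-reg-0}.

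\textbf{Part (iii).}  Substitute~\eqref{eq:hka-reg} into the definition $\hat h_{k,\alpha}(z)=(h_{k,\alpha}^{\textrm{reg}}(z)-h_{k,\alpha}^{\textrm{reg}}(0))/z$ from~\ref{notrems:hka}(v).  The $\Phi_k$ pieces combine into $-\hat\Phi_k(z)/\alpha$ by definition of $\hat\Phi_k$, while the $P_\alpha^{(k)}$ pieces yield $-\frac{1}{\alpha z}(P_\alpha^{(k)}(\log(1-z))-P_\alpha^{(k)}(0))$.  Expanding the latter via~\eqref{eq:Pka} and noting that the $j=0$ term cancels produces the second line of~\eqref{eq:hka-hat}.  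The only real bookkeeping concern is that the identities involving $P_\alpha(\log(1-z))$ are only a priori valid on $U_\alpha$ (and those involving $P_1$ on $U_\alpha\cap U_1$); however, since both sides are meromorphic and already known to coincide on the nonempty open set $W\cap U_\alpha$ where the defining series for $h_{k,\alpha}$ converges, the identity principle propagates each formula to its full stated domain, and no deeper obstacle arises beyond tracking which domain each equality lives on.
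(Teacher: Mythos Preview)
Your proof is correct and follows essentially the same route as the paper: the Bernoulli generating function for part~(i), then iterated application of $\mathcal{D}=-(1-z)\,d/dz$ (which you helpfully identify with $d/dw$) for part~(ii), and direct substitution into the definition of $\hat h_{k,\alpha}$ for part~(iii). Your closing remark about the identity principle is harmless but unnecessary, since the closed form $h_{0,\alpha}(z)=1/(1-(1-z)^\alpha)$ already extends $h_{0,\alpha}$ meromorphically to the full open unit disc, so the formulas hold on all of~$U_\alpha$ directly.
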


\begin{proof}
(i)
For all $z\in U_\alpha$ we have
\begin{equation*}
\begin{aligned}
h_{0,\alpha}(z)&=\frac1{1-(1-z)^\alpha}=\frac1{1-\exp(\alpha\log(1-z))}=-\frac{\alpha\log(1-z)}{\exp(\alpha\log(1-z))-1}\cdot\frac1{\alpha\log(1-z)}\\
&=-\sum_{\ell=0}^\infty\frac1{\ell!}B_\ell(\alpha\log(1-z))^{\ell}\cdot\frac1{\alpha\log(1-z)}=-\frac1{\alpha\log(1-z)}-\frac1\alpha P_\alpha(\log(1-z)).
\end{aligned}
\end{equation*}
This shows the first statement. The second statement follows by~\eqref{eq:h0sing}. In order to conclude the third statement, we let $\alpha=1$ in the first statement, which gives
$$P_1(\log(1-z))=-h_{0,1}(z)-\frac1{\log(1-z)}=-\frac1z-\frac1{\log(1-z)}=-\Phi_0(z)
$$
for all $z\in U_1$.

(ii)
This follows from (i) using~\eqref{eq:hka-regrec}, \eqref{eq:Pka}, and
$$\left(-(1-z)\frac d{dz}\right)(\log(1-z)^j)=j(\log(1-z))^{j-1}
$$
for all $j\in\mathbb{Z}$.

(iii)
This is an immediate consequence of~\eqref{eq:hka-reg}, \eqref{eq:Pka}, and $\log(1)=0$.
\end{proof}

\begin{remark}
Note that the open set $U_\alpha$ on which $P_\alpha(\log(1-z))$ converges does, in general, not contain the entire open disc~$V_\alpha$ from~\eqref{eq:valpha} on which the functions $h_{k,\alpha}^{\textrm{reg}}$ are holomorphic. The reason is that the formulas from Lemma~\ref{lem:hka-taylor} do not directly express $h_{k,\alpha}^{\textrm{reg}}(z)$ as a power series in~$z$. Doing this would rearrange the summation and make the resulting power series converge on~$V_\alpha$\,.
\end{remark}

\begin{proposition}
\label{prop:hpos}
The function $h_\alpha$ from Corollary~\ref{cor:b1resshort} satisfies
\begin{equation*}
\begin{aligned}
h_\alpha(s)&=\int_0^1\left(2h_{2,\alpha}^{\textrm{pos}}(t)-\frac12 h_{0,\alpha}^{\textrm{pos}}(t)\right)(1-t)^{\frac s2-\frac12} t^{-s-1}\,dt\\
&=\int_0^1\left(2\hat h_{2,\alpha}(t)-\frac12 \hat h_{0,\alpha}(t)\right)(1-t)^{\frac s2-\frac12} t^{-s}\,dt,
\end{aligned}
\end{equation*}
where $h_{k,\alpha}^{\textrm{pos}}$ and $\hat h_{k,\alpha}$ are defined as in~\ref{notrems:hka}(v).
\end{proposition}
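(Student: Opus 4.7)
The plan is to start from Corollary~\ref{cor:b1reshka} and decompose each function $h_{k,\alpha}(t)$ for $k\in\{0,2\}$ as $h_{k,\alpha}(t) = h_{k,\alpha}^{\textrm{sing}}(t) + h_{k,\alpha}^{\textrm{reg}}(0) + h_{k,\alpha}^{\textrm{pos}}(t)$, using Notation~\ref{notrems:hka}. This splits the integrand in Corollary~\ref{cor:b1reshka} into three parts: a singular-in-$t$ part, a constant part (which absorbs the stray $+\frac14$ in the bracket), and the desired ``pos'' part. The goal is then to show that, after integration against $(1-t)^{s/2-1/2}t^{-s-1}$, the singular and constant parts cancel identically as meromorphic functions of~$s$.

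For the singular part, Lemma~\ref{lem:hka-sing}(ii) yields directly that the contribution of $2h_{2,\alpha}^{\textrm{sing}}$ vanishes. Lemma~\ref{lem:hka-sing}(i), combined with the Beta-function identity of Remark~\ref{rem:polespec}(iii)--(iv) (interpreting the right-hand side of that lemma as $\frac1{2\alpha}B(\frac{s+1}{2},-s)$, exactly as in its proof), gives the contribution of $-\frac12 h_{0,\alpha}^{\textrm{sing}}$ as $-\frac1{4\alpha}B(\frac{s+1}{2},-s)$.

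For the constant part, I would use \eqref{eq:hka-reg-0} from Lemma~\ref{lem:hka-taylor}(ii) to evaluate the two numbers $h_{k,\alpha}^{\textrm{reg}}(0)$. Since the Bernoulli number $B_3$ vanishes, one has $h_{2,\alpha}^{\textrm{reg}}(0)=0$; and $B_1=-\frac12$ gives $h_{0,\alpha}^{\textrm{reg}}(0)=\frac{\alpha-1}{2\alpha}$. Hence the total constant in the bracket is $2\cdot 0 - \frac12\cdot\frac{\alpha-1}{2\alpha} + \frac14 = \frac1{4\alpha}$, and its integral against $(1-t)^{s/2-1/2}t^{-s-1}$ equals $+\frac1{4\alpha}B(\frac{s+1}{2},-s)$, exactly cancelling the singular contribution.

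What remains is the first claimed identity. The second identity is then immediate from $h_{k,\alpha}^{\textrm{pos}}(t) = t\,\hat h_{k,\alpha}(t)$ (Notation~\ref{notrems:hka}(v)), which absorbs one power of $t$ and turns $t^{-s-1}$ into $t^{-s}$. The only subtlety to watch is that the integral in Corollary~\ref{cor:b1reshka} is obtained by monotone convergence only for $\operatorname{Re} s$ sufficiently negative (both $h_{0,\alpha}$ and $h_{2,\alpha}$ blow up at $t=0$), so the three pieces must be understood via their individual meromorphic continuations in~$s$; since each is meromorphic via the same Beta-function machinery as in Lemma~\ref{lem:hka-sing}, the splitting and recombination are legitimate, and this book-keeping is the main technical point to get right.
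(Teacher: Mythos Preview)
Your proof is correct and follows essentially the same approach as the paper: decompose $h_{k,\alpha}$ into singular, constant, and ``pos'' parts, use Lemma~\ref{lem:hka-sing} to handle the singular pieces, use~\eqref{eq:hka-reg-0} with $B_3=0$ and $B_1=-\tfrac12$ to evaluate the constants, and observe that the resulting Beta-function terms cancel. Your added remark about the meromorphic continuation in~$s$ is a reasonable clarification that the paper leaves implicit.
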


\begin{proof}
Note that the second equation is immediate, so it remains to show the first equation.
Recall from Corollary~\ref{cor:b1reshka} that
$$h_\alpha(s)= \int_0^1\left(2h_{2,\alpha}(t)-\frac12 h_{0,\alpha}(t)+\frac14\right)(1-t)^{\frac s2-\frac 12} t^{-s-1}\,dt.
$$
We already know from Lemma~\ref{lem:hka-sing}(ii) that we can replace $h_{2,\alpha}$ by $h_{2,\alpha}^{\textrm{sing}}$ in this formula. Moreover,
by~\eqref{eq:hka-reg-0} and $B_{0+2+1}=B_3=0$,
\begin{equation*}
h_{2,\alpha}^{\textrm{reg}}(0)=0,
\end{equation*}
so we can actually replace $h_{2,\alpha}$ by $h_{2,\alpha}^{\textrm{pos}}$ in the above formula. It remains to show that
\begin{equation}
\label{eq:0sing0}
\int_0^1\left(-\frac 12 h_{0,\alpha}^{\textrm{sing}}(t)-\frac 12 h_{0,\alpha}^{\textrm{reg}}(0)+\frac14\right)(1-t)^{\frac s2-\frac12} t^{-s-1}\,dt=0.
\end{equation}
By Remark~\ref{notrems:hka}(i) (or~\eqref{eq:hka-reg-0} and $B_1=-\frac12$) we have
\begin{equation*}
h_{0,\alpha}^{\textrm{reg}}(0)=-\frac12\left(\frac1\alpha-1\right),
\end{equation*}
giving $-\frac12 h_{0,\alpha}^{\textrm{reg}}(0)+\frac14=\frac1{4\alpha}$.
Now~\eqref{eq:0sing0} follows from Lemma~\ref{lem:hka-sing}(i).
\end{proof}

Using continuity of the functions $\hat h_{k,\alpha}$ on the closed interval $[0,1]$ (recall~\ref{notrems:hka}(vi)), we now obtain:

\begin{corollary}
\label{cor:b1total}
The function $h_\alpha$ from Corollary~\ref{cor:b1resshort} has a finite value at $s=0$. In particular, that Corollary together with Remark~\ref{rem:psismooth} and
Proposition~\ref{prop:hpos} implies
\begin{equation*}
\begin{aligned}
b_{1,m}(C)&=-\frac{k_f}{4\sqrt\pi (m-1)!}\psi_m(0)h_\alpha(0)
=-\frac{k_f \cdot\Gamma(m+\frac12)}{\sqrt\pi (m-1)!}\int_0^1 \left(\hat h_{2,\alpha}(t)-\frac14\hat h_{0,\alpha}(t)\right)(1-t)^{-\frac12}\,dt,\\
&=-\frac{2 k_f \cdot\Gamma(m+\frac12)}{\sqrt\pi (m-1)!}\int_0^1 \left(\hat h_{2,\alpha}(1-u^2)-\frac14\hat h_{0,\alpha}(1-u^2)\right)\,du,
\end{aligned}
\end{equation*}
where, as introduced before, $k_f=-f''(0)/f'(0)$, $\alpha=1/f'(0)$, and the functions $\hat h_{k,\alpha}$ are defined as in~\ref{notrems:hka}(v).
Using~\eqref{eq:bhalf}, we finally conclude
\begin{equation}
\label{eq:bhalftotal}
b_{1/2}(C)
=-\frac{2 k_f}{\sqrt\pi}\int_0^1\left(\hat h_{2,\alpha}(1-u^2)-\frac14\hat h_{0,\alpha}(1-u^2)\right)\,du
\end{equation}
\end{corollary}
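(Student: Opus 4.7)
The strategy is to evaluate the residue in Corollary~\ref{cor:b1resshort} by establishing that $h_\alpha$ is in fact \emph{continuous} (not merely meromorphic) at $s=0$. Once this is known, the constant term of the Laurent expansion of $\psi_m\cdot h_\alpha$ at $s=0$ is simply the product $\psi_m(0)h_\alpha(0)$, since $\psi_m$ is already known to be holomorphic there by Remark~\ref{rem:psismooth}. The identity for $b_{1,m}(C)$ then drops out by direct substitution, and the desired formula for $b_{1/2}(C)$ follows from~\eqref{eq:bhalf}.

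The only point that requires genuine work is the regularity of $h_\alpha$ at $s=0$. The plan is to use the reformulation of Proposition~\ref{prop:hpos},
\begin{equation*}
h_\alpha(s)=\int_0^1\left(2\hat h_{2,\alpha}(t)-\frac12\hat h_{0,\alpha}(t)\right)(1-t)^{s/2-1/2}\,t^{-s}\,dt,
\end{equation*}
whose crucial improvement over the integrand in Corollary~\ref{cor:b1reshka} is that passing from $h_{k,\alpha}$ to $\hat h_{k,\alpha}$ has absorbed one power of $t$, reducing the $t$-exponent from $-s-1$ to $-s$. By Remark~\ref{notrems:hka}(vi), the functions $\hat h_{k,\alpha}$ are continuous, hence bounded, on $[0,1]$. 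For real $s$ with $|s|\le 1/3$ the full integrand is therefore dominated by $C\cdot t^{-1/3}(1-t)^{-2/3}$, which is integrable on $(0,1)$. Dominated convergence then yields
\begin{equation*}
h_\alpha(0)=\int_0^1\left(2\hat h_{2,\alpha}(t)-\frac12\hat h_{0,\alpha}(t)\right)(1-t)^{-1/2}\,dt<\infty.
\end{equation*}

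Inserting this together with $\psi_m(0)=2\Gamma(m+\tfrac12)$ into Corollary~\ref{cor:b1resshort} and combining constants gives
\begin{equation*}
b_{1,m}(C)=-\frac{k_f\,\Gamma(m+\frac12)}{\sqrt\pi\,(m-1)!}\int_0^1\!\left(\hat h_{2,\alpha}(t)-\frac14\hat h_{0,\alpha}(t)\right)(1-t)^{-1/2}\,dt.
\end{equation*}
The substitution $t=1-u^2$, under which $(1-t)^{-1/2}\,dt=-2\,du$ and the limits reverse, converts this to the $u$-integral of the statement (producing the extra factor of $2$). A final application of~\eqref{eq:bhalf} cancels the $m$-dependent factor $\Gamma(m+\tfrac12)/(m-1)!$ and yields~\eqref{eq:bhalftotal}. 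The sole non-routine step is the dominated-convergence argument that pins down $h_\alpha(0)$ as a finite number; everything else is algebraic bookkeeping and a change of variable.
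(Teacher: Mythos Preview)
Your proof is correct and follows essentially the same approach as the paper. The paper's argument is even terser than yours---it simply invokes the continuity of $\hat h_{k,\alpha}$ on $[0,1]$ (Notation and Remarks~\ref{notrems:hka}(vi)) before stating the corollary, leaving the routine computations implicit; your dominated-convergence bound and the explicit change of variable $t=1-u^2$ are exactly the details this invocation suppresses.
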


\Section{Irrational dependence of $b_{1/2}(C)$ on constant rescalings of the distance circles}
\label{sec:irrat}

\noindent
In this section, we will show that if $f''(0)\ne0$ then, under rescalings $\lambda f$ of~$f$ by a constant $\lambda>0$, the coefficient $b_{1/2}(C)$ is {\it not\/} a rational function of~$\lambda$. Since, by~\ref{notrems:Ccoord}, the length of the distance circles is $2\pi f(r)$ for small $r>0$, this means
that $b_{1/2}(C)$ does not depend rationally on the scaling factor for constant rescalings of small distance circles aronnd~$C$ if $f''(0)\ne0$.
Note that the factor $k_f=-f''(0)/f'(0)$ in~\eqref{eq:bhalftotal} is invariant under such rescalings, while $f'(0)$ changes linearly. Recall that $\alpha$ is just the inverse of~$f'(0)$.
So, what we intend to show is that the integral in~\eqref{eq:bhalftotal} -- or, equivalently, $\alpha$ times that integral -- is not a rational function of the parameter~$\alpha$. The aim of this section is, thus, to prove the following theorem:

\begin{theorem}
\label{thm:irrat}
\begin{equation}
\label{eq:defF}
F:(0,\infty)\ni\alpha\mapsto\int_0^1\left(\alpha\hat h_{2,\alpha}(1-u^2)-\frac14\alpha\hat h_{0,\alpha}(1-u^2)\right)\,du\in\mathbb{R}
\end{equation}
is not a rational function of~$\alpha$. In particular, if $f''(0)\ne0$ then $b_{1/2}(C)$ does not change rationally under constant rescalings of the distance circles near the singularity~$C$.
\end{theorem}

We first give several preparations for the proof.

\begin{definition}
\label{def:calpha}
For each $\alpha>0$ let
\begin{equation*}
c_\alpha:=e^{-\frac{\pi}{2\alpha}}.
\end{equation*}
\end{definition}

\begin{lemma}
\label{lem:intsmall}
For each $k\in\mathbb{N}_0$ there exists a constant~$\Lambda_k>0$, independent of~$\alpha$, such that for each $0<\alpha\le1$,
$$\max\left\{|\alpha\hat h_{k,\alpha}(1-u^2)|:u\in[0,c_\alpha]\right\}\le\Lambda_k\,.
$$
\end{lemma}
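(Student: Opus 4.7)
The key observation is that the cut-off $c_\alpha=e^{-\pi/(2\alpha)}$ is calibrated so that, for $u\in[0,c_\alpha]$ and $w:=u^2=1-z$, the ``slow'' power $w^\alpha$ satisfies $w^\alpha\le c_\alpha^{2\alpha}=e^{-\pi}$ \emph{independently of~$\alpha$}. Moreover, since $\alpha\le 1$, we also have $w\le c_\alpha^2\le e^{-\pi}$, and $|z|=1-u^2\ge 1-e^{-\pi}$ is bounded away from zero. All three bounds are uniform in $\alpha\in(0,1]$.

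First I would estimate $|\alpha h_{k,\alpha}^{\textrm{reg}}(z)|$ uniformly on this set. Combining Definition~\ref{def:hka} with the identity $h_{k,\alpha}^{\textrm{sing}}(z)=\frac1\alpha h_{k,1}(z)$ from~\eqref{eq:hka-singrec} and the absolutely convergent expansion $h_{k,1}(z)=\sum_{n=0}^\infty n^k w^n$ valid on~$W$, one obtains
$$\alpha h_{k,\alpha}^{\textrm{reg}}(z)=\sum_{n=0}^\infty\bigl(\alpha^{k+1}n^k w^{n\alpha}-n^k w^n\bigr).$$
Using the termwise estimates from the previous paragraph together with $\alpha^{k+1}\le 1$, both summands are dominated by $n^k e^{-n\pi}$, so the total series is bounded in absolute value by the $\alpha$-independent constant $M_k:=1+2\sum_{n=1}^\infty n^k e^{-n\pi}$.

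Next I would bound the value at the origin by~\eqref{eq:hka-reg-0}: $|\alpha h_{k,\alpha}^{\textrm{reg}}(0)|=\frac{|B_{k+1}|}{k+1}(1-\alpha^{k+1})\le\frac{|B_{k+1}|}{k+1}$ for $\alpha\in(0,1]$. Combining these two estimates with the lower bound on~$|z|$,
$$|\alpha\hat h_{k,\alpha}(1-u^2)|=\frac{|\alpha h_{k,\alpha}^{\textrm{reg}}(z)-\alpha h_{k,\alpha}^{\textrm{reg}}(0)|}{|z|}\le\frac{1}{1-e^{-\pi}}\left(M_k+\frac{|B_{k+1}|}{k+1}\right),$$
and the right-hand side can be taken as~$\Lambda_k$; continuity of $\hat h_{k,\alpha}$ on $[0,1]$ (recall \ref{notrems:hka}(vi)) takes care of the boundary value at $u=0$. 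There is no genuine obstacle: the only subtle point is recognizing that $c_\alpha$ is engineered to make $c_\alpha^{2\alpha}$ $\alpha$-independent, which is precisely what allows the tails of both geometric-type series to be summed against a common $\alpha$-independent bound.
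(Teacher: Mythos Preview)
Your argument is correct. Both your proof and the paper's hinge on the same crucial observation, which you state clearly: $c_\alpha$ is chosen so that $c_\alpha^{2\alpha}=e^{-\pi}$ is independent of~$\alpha$, and since $\alpha\le 1$ one also has $c_\alpha^2\le e^{-\pi}$, so that both $w=u^2$ and $w^\alpha$ stay below the fixed threshold $e^{-\pi}$ on $[0,c_\alpha]$.

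Where the two proofs diverge is in how they exploit this. The paper argues structurally: starting from the closed form $h_{0,\alpha}(z)=(1-(1-z)^\alpha)^{-1}$ and applying $\bigl(-(1-z)\frac{d}{dz}\bigr)^k$, it observes that $\alpha\hat h_{k,\alpha}(z)$ is a finite linear combination of products of the five building blocks $\alpha$, $1/z$, $1-z$, $(1-(1-z)^\alpha)^{-1}$, $(1-z)^\alpha$, and then bounds each block separately on the range in question. You instead work directly with the series, writing $\alpha h_{k,\alpha}^{\textrm{reg}}(z)=\sum_{n\ge0}(\alpha^{k+1}n^k w^{n\alpha}-n^k w^n)$ via~\eqref{eq:hka-singrec} and the identity $h_{k,1}=h_{k,1}^{\textrm{sing}}$, and bound the tail termwise by $2\sum n^k e^{-n\pi}$; then you divide by $|z|\ge 1-e^{-\pi}$ and add the uniform bound on $\alpha h_{k,\alpha}^{\textrm{reg}}(0)$ from~\eqref{eq:hka-reg-0}. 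Your route avoids having to unwind the algebraic structure of the iterated derivative for each~$k$ and gives an explicit constant; the paper's route keeps everything finite and makes transparent exactly which quantities need to be controlled. Both are short and entirely adequate for the purpose.
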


\begin{proof}
The definitions and formulas of~\ref{notrems:hka} together with~\eqref{eq:hka-reg-0} imply that
\begin{equation*}
\begin{aligned}
\alpha\hat h_{k,\alpha}(z)&=\frac1z\left(\alpha\left(-(1-z)\frac d{dz}\right)^k h_{0,\alpha}^{\textrm{reg}}(z)-\alpha h_{k,\alpha}^{\textrm{reg}}(0)\right)\\
&=\frac1z\left(\left(-(1-z)\frac d{dz}\right)^k \left(\frac\alpha{1-(1-z)^\alpha}-\frac1z\right)-\frac1{k+1}B_{k+1}(1-\alpha^{k+1})\right)
\end{aligned}
\end{equation*}
This implies that for each fixed $k\in\mathbb{N}_0$\,, $\alpha\hat h_{k,\alpha}(z)$ is a linear combination of finite products of the terms
$$\alpha, \ \frac1z, \ 1-z, \ \frac1{1-(1-z)^\alpha}, \ (1-z)^\alpha.
$$
Consequently, $\alpha\hat h_{k,\alpha}(1-u^2)$ is a linear combination of finite products of the terms
$$\alpha, \ \frac1{1-u^2},  \ u^2, \ \frac1{1-u^{2\alpha}}, \ u^{2\alpha}.
$$
For each $0<\alpha\le1$ one has $c_\alpha\le e^{-\frac\pi 2}<\frac12$ and, for every $u\in[0,c_\alpha]$,
$$0\le u^2\le u^{2\alpha}\le e^{-\pi}<\frac12\text{ \ and \ }0<\frac1{1-u^2}\le\frac1{1-u^{2\alpha}}\le\frac1{1-e^{-\pi}}<2.
$$
In view of the structure of $\alpha\hat h_{k,\alpha}(1-u^2)$ described above, these estimates obviously imply the statement.
\end{proof}

\begin{proposition}
\label{prop:Fapprox}
For each $j\in\mathbb{N}$ write
$$\tau_j:(0,1)\ni u\mapsto\frac{(\log(u^2))^j}{1-u^2}\in\mathbb{R}\text{\ \ and\ \ }I_j:=\int_0^1\tau_j(u)\,du\in\mathbb{R}.
$$
Then, for all $k\in\mathbb{N}_0$ and $r\in\mathbb{N}$ we have
\begin{equation}
\label{eq:hkaint-approx}
\int_0^1\alpha\hat h_{k,\alpha}(1-u^2)\,du
=-\int_0^1\hat\Phi_k(1-u^2)\,du
-\sum_{j=1}^r\frac{I_j}{j!\cdot(j+k+1)}B_{j+k+1}\alpha^{j+k+1}
+o(\alpha^{r+k+1})
\end{equation}
as $\alpha\searrow0$, where $\Phi_k$ is as in Definition~\ref{def:powera}(ii). In particular, for each $r\in\mathbb{N}$, the function~$F$ from~\eqref{eq:defF} satisfies
\begin{equation*}
\begin{aligned}
F(\alpha)
={}&-\int_0^1\left(\hat\Phi_2(1-u^2)-\frac14\hat\Phi_0(1-u^2)\right)\,du\\
&{}+\frac{I_1}{48}\alpha^2
-\sum_{j=1}^r \left(\frac{I_j}{j!\cdot(j+3)}-\frac14\cdot\frac{I_{j+2}}{(j+3)!}\right)B_{j+3}\alpha^{j+3}+o(\alpha^{r+3})
\end{aligned}
\end{equation*}
as $\alpha\to0$.
\end{proposition}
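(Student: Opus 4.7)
The starting point is the expansion from Lemma~\ref{lem:hka-taylor}(iii), which after multiplication by~$\alpha$ and the substitution $z=1-u^2$ reads
\begin{equation*}
\alpha\hat h_{k,\alpha}(1-u^2)=-\hat\Phi_k(1-u^2)-\frac{1}{1-u^2}\bigl[P_\alpha^{(k)}(\log u^2)-P_\alpha^{(k)}(0)\bigr]
\end{equation*}
on the set where $1-u^2\in U_\alpha$. The condition $|\log u^2|<R_\alpha=2\pi/\alpha$ translates to $u>c_\alpha=e^{-\pi/(2\alpha)}$, so the identity is valid only on $(c_\alpha,1)$. The plan is therefore to split $\int_0^1=\int_0^{c_\alpha}+\int_{c_\alpha}^1$, dispose of the first piece via the exponentially small bound of Lemma~\ref{lem:intsmall}, and Taylor-expand the second piece in powers of~$\alpha$.

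On $[0,c_\alpha]$, Lemma~\ref{lem:intsmall} gives $|\alpha\hat h_{k,\alpha}(1-u^2)|\leq\Lambda_k$ uniformly for $\alpha\in(0,1]$, and Remark~\ref{rem:Pk-phicont}(ii) says that $\hat\Phi_k$ is continuous and hence bounded on $[0,1]$. Consequently both $\int_0^{c_\alpha}\alpha\hat h_{k,\alpha}(1-u^2)\,du$ and $\int_0^{c_\alpha}\hat\Phi_k(1-u^2)\,du$ are $O(c_\alpha)=O(e^{-\pi/(2\alpha)})$, and therefore $o(\alpha^N)$ for every $N\in\mathbb{N}$. On $[c_\alpha,1]$ the decisive reduction is the scaling identity $P_\alpha^{(k)}(w)=\alpha^{k+1}P_1^{(k)}(\alpha w)$, which is immediate from Definition~\ref{def:powera}(i). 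Setting $v=\alpha\log u^2$ gives $|v|\leq\pi<2\pi=R_1$ uniformly in~$\alpha$, and since $P_1^{(k)}$ is holomorphic on $|v|<2\pi$, Taylor's theorem provides, for each $r\in\mathbb{N}$, a constant $C_{r,k}$ such that
\begin{equation*}
P_1^{(k)}(v)-P_1^{(k)}(0)=\sum_{j=1}^r\frac{B_{j+k+1}}{j!\,(j+k+1)}v^j+R_{r,k}(v),\qquad |R_{r,k}(v)|\leq C_{r,k}|v|^{r+1}\text{ on }|v|\leq\pi.
\end{equation*}
Substituting $v=\alpha\log u^2$, dividing by $1-u^2$, and integrating over $[c_\alpha,1]$, the Taylor remainder contributes $O\bigl(\alpha^{r+k+2}\int_0^1|\tau_{r+1}|\,du\bigr)=o(\alpha^{r+k+1})$. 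The substitution $u=e^{-t}$ in $\int_0^{c_\alpha}|\tau_j|\,du$ produces an incomplete gamma tail of size $O((\pi/\alpha)^je^{-\pi/(2\alpha)})=o(\alpha^N)$, so each $\int_{c_\alpha}^1\tau_j\,du$ may be replaced by~$I_j$ with a negligible error. Assembling the two pieces then yields~\eqref{eq:hkaint-approx}.

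For the second statement, we apply~\eqref{eq:hkaint-approx} with $k=2$ summed up to $j=r$ and with $k=0$ summed up to $j=r+2$, so that both remainders are $o(\alpha^{r+3})$. Writing $A_k$ for the integral on the left of~\eqref{eq:hkaint-approx} and forming $F(\alpha)=A_2-\tfrac14 A_0$ combines the integral parts into $-\int_0^1(\hat\Phi_2-\tfrac14\hat\Phi_0)(1-u^2)\,du$. In the $k=0$ sum, the $j=1$ term yields $\tfrac14\cdot\tfrac{B_2}{1!\cdot 2}I_1\alpha^2=\tfrac{I_1}{48}\alpha^2$; the $j=2$ term vanishes because $B_3=0$; and for $j\in\{3,\ldots,r+2\}$ the reindexing $j\mapsto j'+2$ places this term at the power $\alpha^{j'+3}$ alongside the corresponding term of the $k=2$ sum, producing the combined coefficient $-\bigl(\tfrac{I_{j'}}{j'!(j'+3)}-\tfrac14\tfrac{I_{j'+2}}{(j'+3)!}\bigr)B_{j'+3}$ for $j'=1,\ldots,r$. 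The principal delicacy throughout is the failure of the series of Lemma~\ref{lem:hka-taylor}(iii) to converge on the strip $[0,c_\alpha]$: one cannot integrate it term-by-term on all of $[0,1]$, and Lemma~\ref{lem:intsmall} is precisely the tool that replaces this non-convergent region by an error beating every power of~$\alpha$.
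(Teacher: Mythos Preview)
Your argument is correct and follows essentially the same strategy as the paper: split $\int_0^1$ at $c_\alpha=e^{-\pi/(2\alpha)}$, kill the piece over $[0,c_\alpha]$ via Lemma~\ref{lem:intsmall} and the exponential smallness of $c_\alpha$, and on $[c_\alpha,1]$ expand using Lemma~\ref{lem:hka-taylor}(iii). Your use of the scaling identity $P_\alpha^{(k)}(w)=\alpha^{k+1}P_1^{(k)}(\alpha w)$ and Taylor's theorem with remainder for the fixed function $P_1^{(k)}$ on the fixed compact disc $|v|\le\pi$ is a slightly cleaner way to bound the tail than the paper's direct series manipulation, but the two are equivalent; the derivation of the second statement from the first is also carried out the same way in both.
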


\begin{proof}
First of all, note that the numbers $I_j$ are indeed finite: The integrand $\tau_j$ of~$I_j$ extends continuously to $u=1$ (with value $-\log'(1)=-1$ for $j=1$ and value~$0$ for $j>1$); moreover, powers of $\log$ are integrable over the unit interval.

By $B_2=\frac16$ and the definition of~$F$, the first statement of the lemma immediately implies the second.
It remains to prove the first statement.
Fix $k\in\mathbb{N}_0$. For any $\alpha>0$ and $r\in\mathbb{N}$ let
\begin{equation*}
\begin{aligned}
D_*(\alpha,r)&:=\int_0^{c_\alpha} (\alpha\hat h_{k,\alpha}(1-u^2)+\hat\Phi_k(1-u^2))\,du+\sum_{j=1}^r\frac{\int_0^{c_\alpha}\tau_j(u)\,du}{j!\cdot(j+k+1)}B_{j+k+1}\alpha^{j+k+1},\\
D^*(\alpha,r)&:=\int_{c_\alpha}^1 (\alpha\hat h_{k,\alpha}(1-u^2)+\hat\Phi_k(1-u^2))\,du+\sum_{j=1}^r\frac{\int_{c_\alpha}^1\tau_j(u)\,du}{j!\cdot(j+k+1)}B_{j+k+1}\alpha^{j+k+1},
\end{aligned}
\end{equation*}
where $c_\alpha=e^{-\frac\pi{2\alpha}}$ as in Definition~\ref{def:calpha}. For the rest of this proof, we may assume that $\alpha<1$; in particular, $c_\alpha<\frac12$\,.
Our aim is to show that $D_*(\alpha,r)+D^*(\alpha,r)\in o(\alpha^{r+k+1})$ as $\alpha\searrow0$.

We will first show that $D_*(\alpha,r)\in o(\alpha^\infty)$, meaning that it is in $o(\alpha^n)$ for each $n\in\mathbb{N}$, as $\alpha\searrow0$.
From Lemma~\ref{lem:intsmall} and Remark~\ref{rem:Pk-phicont}(ii) we easily conclude that the first integral in $D_*(\alpha,r)$ is indeed in $o(\alpha^\infty)$. Moreover, for each $j\in\mathbb{N}$ one has $\int\log^j(v)\,dv=(-1)^j j!\cdot v\sum_{s=0}^j\frac1{s!}(-\log(v))^s$. Hence,
recalling that $\log(c_\alpha)=-\frac\pi{2\alpha}$ and $c_\alpha\in o(\alpha^\infty)$, we have:
$$\left|\int_0^{c_\alpha}\tau_j(u)\,du\right|\le\frac1{1-c_\alpha^2}\cdot\left|\int_0^{c_\alpha}\log^j(u^2)\,du\right|
<\frac43\cdot 2^j j!\cdot c_\alpha\sum_{s=0}^j\frac1{s!}(\tfrac\pi2)^s\alpha^{-s}\in o(\alpha^\infty)
$$
for each $j\in\mathbb{N}$.
This implies $D_*(\alpha,r)\in o(\alpha^\infty)$.

In order to prove~\eqref{eq:hkaint-approx}, it remains to show that $D^*(\alpha,r)\in o(\alpha^{r+k+1})$. Note that for $u\in[c_\alpha\,,1]$ we have $|\log(u^2)|\le\frac\pi\alpha$\,.
This implies that the compact set $\{1-u^2: u\in[c_\alpha\,,1]\}$ is contained in~$U_\alpha$\,; recall Definition~\ref{def:powera}.
In particular, the power series $P_\alpha^{(k)}(w)$
converges uniformly on the compact set $\{\log(u^2):u\in[c_\alpha\,,1]\}$.
Using~\eqref{eq:Pka} and the fact that
the function $u\mapsto\frac{\log(u^2)}{1-u^2}$ is bounded on $[c_\alpha\,,1]$, it follows that the series
$$\frac1{1-u^2}\sum_{j=1}^\infty\frac{(\log(u^2))^j}{j!\cdot(j+k+1)}B_{j+k+1}\alpha^{j+k+1}
$$
converges uniformly in $u\in[c_\alpha\,,1]$. By~\eqref{eq:hka-hat}, the corresponding limit function is
$$u\mapsto-\alpha\hat h_{k,\alpha}(1-u^2)-\hat\Phi_k(1-u^2)
$$
with~$\hat\Phi_k$ from Definition~\ref{def:powera}(ii).
In particular,
$$\sum_{j=1}^\infty\frac{\int_{c_\alpha}^1\tau_j(u)\,du}{j!\cdot(j+k+1)}B_{j+k+1}\alpha^{j+k+1}=-\int_{c_\alpha}^1(\alpha\hat h_{k,\alpha}(1-u^2)+\hat\Phi_k(1-u^2))\,du.
$$
Hence,
\begin{equation*}
\begin{aligned}
D^*(r,\alpha)&=-\sum_{j=r+1}^\infty\frac{\int_{c_\alpha}^1\tau_j(u)\,du}{j!\cdot(j+k+1)}B_{j+k+1}\alpha^{j+k+1}\\
&=-\alpha^{r+k+2}\sum_{j=r+1}^\infty\frac{\int_{c_\alpha}^1\tau_j(u)\,du}{j!\cdot(j+k+1)}B_{j+k+1}\alpha^{j-r-1}.
\end{aligned}
\end{equation*}
This is in $o(\alpha^{r+k+1})$, as claimed.
\end{proof}

\begin{corollary}
\label{cor:taylorF}
The function~$F$ from~\eqref{eq:defF} continuously extends to a function $F:[0,\infty)\,\to\mathbb{R}$. This function is infinitely differentiable at $\alpha=0$, and the
corresponding Taylor series $T_{0}F$ of~$F$ around~$0$ is given by
\begin{equation*}
\begin{aligned}
T_{0}F(\alpha)
={}&-\int_0^1\left(\hat\Phi_2(1-u^2)-\frac14\hat\Phi_0(1-u^2)\right)\,du\\
&{}+\frac{I_1}{48}\alpha^2
-\sum_{j=1}^\infty \left(\frac{I_j}{j!\cdot(j+3)}-\frac14\cdot\frac{I_{j+2}}{(j+3)!}\right)B_{j+3}\alpha^{j+3},
\end{aligned}
\end{equation*}
where $I_j$ is as in Proposition~\ref{prop:Fapprox}.
\end{corollary}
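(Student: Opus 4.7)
The claim will be deduced by bootstrapping Proposition~\ref{prop:Fapprox} from an asymptotic expansion of $F$ to an asymptotic expansion of every $\alpha$-derivative of $F$, and then using the fundamental theorem of calculus to conclude $C^\infty$ extension to the endpoint.

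First, Proposition~\ref{prop:Fapprox} applied with any $r\ge1$ yields $F(\alpha)\to c_0$ as $\alpha\searrow0$, where $c_0:=-\int_0^1\bigl(\hat\Phi_2(1-u^2)-\tfrac14\hat\Phi_0(1-u^2)\bigr)\,du$; this extends $F$ continuously to $[0,\infty)$ by setting $F(0):=c_0$. Second, by the structural description of $\alpha\hat h_{k,\alpha}(1-u^2)$ given in the proof of Lemma~\ref{lem:intsmall} -- a linear combination of finite products of the elementary factors $\alpha$, $u^2$, $1/(1-u^2)$, $u^{2\alpha}$, $1/(1-u^{2\alpha})$ -- the integrand of $F(\alpha)$ is jointly smooth in $(\alpha,u)\in(0,\infty)\times(0,1)$, with each $\alpha$-derivative integrable in $u$ locally uniformly in $\alpha$; hence $F\in C^\infty((0,\infty))$, with derivatives computed by differentiation under the integral.

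The core step is to show that for every $n\ge1$, the derivative $F^{(n)}(\alpha)$ admits the asymptotic expansion obtained by formally differentiating the claimed series $T_0F$ term by term. To prove this, I would repeat the proof of Proposition~\ref{prop:Fapprox} after applying $\partial_\alpha^n$ to the integrand, using the same splitting at $c_\alpha=e^{-\pi/(2\alpha)}$. On $[c_\alpha,1]$, the representation
\begin{equation*}
\alpha\hat h_{k,\alpha}(1-u^2)=-\hat\Phi_k(1-u^2)-\frac{1}{1-u^2}\sum_{j=1}^\infty\frac{B_{j+k+1}}{j!\,(j+k+1)}\alpha^{j+k+1}(\log u^2)^j
\end{equation*}
derived from~\eqref{eq:hka-hat} may be differentiated in $\alpha$ term by term, since the bound $\alpha|\log u^2|\le\pi$ persists and guarantees uniform convergence of the differentiated series. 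On $[0,c_\alpha]$, applying $\partial_\alpha^n$ to the elementary-product form of $\alpha\hat h_{k,\alpha}(1-u^2)$ introduces at most $n$ extra factors of $\log u$ (from differentiating $u^{2\alpha}$), each bounded by $\pi/(2\alpha)$ on that interval; consequently $\int_0^{c_\alpha}\partial_\alpha^n(\cdots)\,du=O(\alpha^{-n}c_\alpha)=o(\alpha^\infty)$, so the splitting argument of Proposition~\ref{prop:Fapprox} goes through verbatim for $F^{(n)}$.

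Once every $F^{(n)}$ has an asymptotic expansion matching the $n$-th formal derivative of $T_0F$, a standard induction on $n$ -- integrating $F^{(n+1)}$ from $0$ to $\alpha$ and using the leading terms of its asymptotic expansion near $0$ -- shows that $F^{(n)}$ extends continuously to $0$ with $F^{(n)}(0)=n!\,c_n$, so $F\in C^\infty([0,\infty))$ and its Taylor series at $0$ is precisely $T_0F$. I expect the main obstacle to be the careful accounting of constants in the derivative splitting step: verifying that the exponential decay of $c_\alpha$ really does absorb every polynomial factor $\alpha^{-n}|\log u|^j$ arising from repeated $\alpha$-differentiation of the elementary factors. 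This is conceptually routine but tedious to write out, and everything else in the corollary reduces to a direct invocation of Proposition~\ref{prop:Fapprox}.
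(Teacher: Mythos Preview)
The paper gives no separate proof of this corollary; it is stated as an immediate consequence of Proposition~\ref{prop:Fapprox}. Strictly speaking, that proposition only yields an asymptotic expansion of $F$ to every order, which by itself does not imply that $F$ is $C^\infty$ at $\alpha=0$ in the usual sense (an asymptotic expansion to all orders determines Peano derivatives, not ordinary ones). Your proposal recognises this and supplies the missing ingredient: you show that each $\partial_\alpha^n F$ again satisfies an expansion of the same shape, obtained by differentiating under the integral sign and rerunning the splitting argument, and then integrate back. This is a genuinely more complete argument than what the paper offers; for the application in Theorem~\ref{thm:irrat}, however, only the asymptotic expansion itself is needed (a rational function continuous at $0$ is analytic there, and its Taylor coefficients must agree with the asymptotic ones), so the paper's shortcut is harmless for its purposes.

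One correction to your sketch: on the interval $[0,c_\alpha]$ you claim that each factor $|\log u|$ arising from $\partial_\alpha u^{2\alpha}$ is bounded by $\pi/(2\alpha)$. This is false; that bound holds on $[c_\alpha,1]$, whereas on $[0,c_\alpha]$ one has $|\log u|\ge\pi/(2\alpha)$ and $|\log u|\to\infty$ as $u\to0$. The conclusion $\int_0^{c_\alpha}\partial_\alpha^n(\cdots)\,du\in o(\alpha^\infty)$ is nevertheless correct: after $n$ differentiations the integrand is bounded by $C_n(1+|\log u|)^n$ times a quantity that is uniformly bounded as in Lemma~\ref{lem:intsmall}, and $\int_0^{c_\alpha}|\log u|^n\,du = c_\alpha\sum_{s=0}^n\frac{n!}{s!}|\log c_\alpha|^s = O(c_\alpha\,\alpha^{-n})\in o(\alpha^\infty)$. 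With this fix, your argument goes through.
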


\begin{theorem}
\label{thm:taylorF}
The Taylor series $T_0F$ of~$F$ around~$0$ has convergence radius~$0$.
\end{theorem}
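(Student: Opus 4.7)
The plan is to read off the Taylor coefficients from Corollary~\ref{cor:taylorF}, evaluate the integrals $I_j$ in closed form, and then show that the resulting coefficients grow super-exponentially in $j$, so that by Cauchy--Hadamard the radius of convergence must be zero.

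First I would compute $I_j$ explicitly. Using the geometric expansion $1/(1-u^2)=\sum_{n\ge 0}u^{2n}$ on $(0,1)$ together with the standard identity $\int_0^1 u^{2n}(\log u)^j\,du=(-1)^j j!/(2n+1)^{j+1}$ (monotone convergence), I obtain
\begin{equation*}
I_j=2^j(-1)^j j!\,\lambda(j+1),\qquad\text{where }\lambda(s):=(1-2^{-s})\zeta(s)=\sum_{n\ge 1,\ n\text{ odd}}n^{-s}.
\end{equation*}
Next I would observe that only odd $j$ contribute to $T_0F$, because $B_{j+3}=0$ whenever $j+3$ is odd and $\ge 3$. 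Writing $j=2k-1$ with $k\ge 1$, a direct simplification collapses the coefficient of $\alpha^{2k+2}$ in $T_0F$ (as read off from Corollary~\ref{cor:taylorF}) to
\begin{equation*}
c_{2k+2}=\frac{B_{2k+2}\,2^{2k-1}}{2k+2}\bigl(\lambda(2k)-\lambda(2k+2)\bigr).
\end{equation*}

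Then I would asymptotically estimate each factor. The classical bound $|B_{2k+2}|\sim 2(2k+2)!/(2\pi)^{2k+2}$ provides factorial growth. Writing the difference as a Dirichlet series,
\begin{equation*}
\lambda(2k)-\lambda(2k+2)=\sum_{n\ge 3,\ n\text{ odd}}\frac{n^2-1}{n^{2k+2}},
\end{equation*}
the $n=3$ term dominates, so $\lambda(2k)-\lambda(2k+2)\sim \tfrac{8}{9}\cdot 3^{-2k}$ as $k\to\infty$. Multiplying these estimates yields, for an explicit constant $C>0$,
\begin{equation*}
|c_{2k+2}|\sim C\cdot (2k+1)!\,(3\pi)^{-2k}.
\end{equation*}
Stirling's formula gives $(2k+1)!^{1/(2k+2)}\sim 2k/e\to\infty$, while $(3\pi)^{2k/(2k+2)}\to 3\pi$, so $|c_{2k+2}|^{1/(2k+2)}\to\infty$ and the Cauchy--Hadamard formula gives radius of convergence~$0$.

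The \emph{main obstacle} is the delicate cancellation in $\lambda(2k)-\lambda(2k+2)$: a priori one might worry that this difference decays so quickly with $k$ that it cancels the factorial growth of $|B_{2k+2}|$. The key point is that the Dirichlet-series representation pins the decay rate down to the purely geometric rate $3^{-2k}$, which is no match for factorial growth, so the coefficients still blow up super-exponentially. (No sign cancellation between the two pieces occurs either: $\lambda$ is strictly decreasing on $(1,\infty)$.) Once the estimate on $|c_{2k+2}|$ is in place, the conclusion is immediate from Cauchy--Hadamard.
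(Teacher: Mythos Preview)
Your argument is correct, and the simplification of the coefficient of $\alpha^{2k+2}$ to $\frac{B_{2k+2}\,2^{2k-1}}{2k+2}\bigl(\lambda(2k)-\lambda(2k+2)\bigr)$ checks out line by line once one uses $I_j=2^j(-1)^j j!\,\lambda(j+1)$. The asymptotic $\lambda(2k)-\lambda(2k+2)\sim\tfrac{8}{9}\cdot 3^{-2k}$ is exactly the dominant term of the Dirichlet series, and combined with $|B_{2k+2}|\sim 2(2k+2)!/(2\pi)^{2k+2}$ it gives $|c_{2k+2}|\sim \tfrac{2}{9\pi^2}(2k+1)!\,(3\pi)^{-2k}$, whence $|c_{2k+2}|^{1/(2k+2)}\to\infty$.

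The paper takes a different route. It writes $I_{2n-1}$ via the closed form $2^{2n-1}\tfrac{1-2^{2n}}{4n}\pi^{2n}|B_{2n}|$, so that the coefficient of $\alpha^{j+3}$ becomes a difference of two terms, each a product of two even Bernoulli numbers. To rule out cancellation between these two terms the paper has to control the ratio $|B_{j+1}|/|B_{j+3}|$ sharply, and for this it invokes the recent inequalities of Bagul (2023). Your use of the Dirichlet $\lambda$-function sidesteps this completely: positivity of $\lambda(2k)-\lambda(2k+2)$ is immediate from monotonicity of the series, and the decay rate $3^{-2k}$ is read off from the leading term $n=3$. So your argument is more self-contained and more transparent about where the super-exponential growth comes from; the paper's version, in exchange, makes the connection to the explicit Bernoulli-number form of $I_j$ visible throughout.
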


\begin{proof}
Since $B_{j+3}=0$ if $j\in\mathbb{N}$ is even, only the summands with $j$ odd actually occur in~$T_0F$.
For each $n\in\mathbb{N}$ one has (noting that $\log(u^2)=2\log(u)$)
$$I_{2n-1}=2^{2n-1}\frac{1-2^{2n}}{4n}\pi^{2n}|B_{2n}|;
$$
see, e.g., \cite{GR07}, p.~550. In particular, for each odd $j\in\mathbb{N}$,
$$I_j=2^j\cdot\frac{1-2^{j+1}}{2(j+1)}\pi^{j+1}|B_{j+1}| \text{ \ and \ }I_{j+2}=2^{j+2}\cdot\frac{1-2^{j+3}}{2(j+3)}\pi^{j+3}|B_{j+3}|.
$$
Let
$$V_j:=-\left(\frac{I_j}{j!\cdot(j+3)}-\frac14\cdot\frac{I_{j+2}}{(j+3)!}\right)
$$
for all $j\in\mathbb{N}$. Then for all odd $j\in\mathbb{N}$ we obtain:
\begin{equation*}
\begin{aligned}
V_j&=2^j\left(\frac{2^{j+1}-1}{2(j+1)!\cdot(j+3)}{\pi^{j+1}}|B_{j+1}|-\frac{2^{j+3}-1}{2(j+3)!\cdot(j+3)}\pi^{j+3}|B_{j+3}|\right)\\
&=\frac{2^{j-1}\pi^{j+3}}{(j+3)!\cdot(j+3)}\left((2^{j+1}-1)\frac{(j+2)(j+3)}{\pi^2}|B_{j+1}|-(4\cdot 2^{j+1}-1)|B_{j+3}|\right)\\
&=\frac{2^{j-1}\pi^{j+3}|B_{j+3}|}{(j+3)!\cdot(j+3)}\left((2^{j+1}-1)\frac{(j+2)(j+3)}{\pi^2}\cdot\frac{|B_{j+1}|}{|B_{j+3}|}-(2^{j+3}-1)\right)\\
&=\frac{2^{j-1}\pi^{j+3}|B_{j+3}|}{(j+3)!\cdot(j+3)}\left((2^{j+1}-1)D_j-(2^{j+3}-1)\right),
\end{aligned}
\end{equation*}
where
$$D_j:=\frac{(j+2)(j+3)}{\pi^2}\cdot\frac{|B_{j+1}|}{|B_{j+3}|}\,.
$$
Bagul~\cite{Ba23} recently proved that
$$\frac{2(2n)!}{\pi^{2n}(2^{2n}-1)}\cdot\frac{3^{2n}}{3^{2n}-\alpha}<|B_{2n}|<\frac{2(2n)!}{\pi^{2n}(2^{2n}-1)}\cdot\frac{3^{2n}}{3^{2n}-\beta}
$$
for all $n\in\mathbb{N}$, all $\alpha\le1$ and all $\beta\ge 9(1-\frac8{\pi^2})\approx 1.704875$. We use $\alpha:=1$, $\beta:=2$ in this formula and conclude, for each odd $j\in\mathbb{N}$:
\begin{equation*}
D_j>\frac19\cdot\frac{(2^{j+3}-1)(3^{j+3}-2)}{(2^{j+1}-1)(3^{j+1}-1)}=\frac{(2^{j+3}-1)(3^{j+1}-\frac29)}{(2^{j+1}-1)(3^{j+1}-1)}
\end{equation*}
and
\begin{equation*}
\begin{aligned}
(2^{j+1}-1)D_j-(2^{j+3}-1)&>\frac{(2^{j+3}-1)(3^{j+1}-\frac29)-(2^{j+3}-1)(3^{j+1}-1)}{3^{j+1}-1}\\
&=\frac79\cdot\frac{2^{j+3}-1}{3^{j+1}-1}
\end{aligned}
\end{equation*}
Consequently,
$$V_j>\frac{2^{j-1}\pi^{j+3}|B_{j+3}|}{(j+3)!\cdot(j+3)}\cdot\frac79\cdot\frac{2^{j+3}-1}{3^{j+1}-1}>0
$$
for each odd $j\in\mathbb{N}$.
Using $\limsup_{j\to\infty}\sqrt[j+3]{|B_{j+3}|/(j+3)!}=\frac1{2\pi}$, we conclude
\begin{equation}
\label{eq:vjsqrtj}
\limsup_{j\to\infty}\sqrt[j+3]{|V_j|}\ge\frac23>0.
\end{equation}
Recall from Corollary~\ref{cor:taylorF} that the coefficient of $\alpha^{j+3}$ in $T_0F(\alpha)$ is $V_j\cdot B_{j+3}$ for each odd $j\in\mathbb{N}$. Now~\eqref{eq:vjsqrtj} implies that 
$$\limsup_{j\to\infty}\sqrt[j+3]{|V_j|\cdot|B_{j+3}|}=\infty.
$$
Thus, the radius of convergence of $T_0F$ is indeed zero.
\end{proof}

\begin{proof}[Proof of Theorem~\ref{thm:irrat}]
By Theorem~\ref{thm:taylorF}, the continuous extension $F:[0,\infty) \to\mathbb{R}$ from Corollary~\ref{cor:taylorF} of our original $F:(0,\infty)\to\mathbb{R}$ cannot coincide with the restriction of any analytic function. Since rational functions are analytic, $F$ cannot be rational. Theorem~\ref{thm:irrat} now follows.
\end{proof}

\end{document}